\theoremstyle{plain}
	\newtheorem{thm}{\protect\theoremname}
	\newtheorem{thm}{\protect\theoremname}[chapter]
\theoremstyle{remark}
\newtheorem{rem}[thm]{\protect\remarkname}
\theoremstyle{definition}
\newtheorem{example}[thm]{\protect\examplename}
\theoremstyle{plain}
\theoremstyle{plain}
\newtheorem{prop}[thm]{\protect\propositionname}
\theoremstyle{plain}
\newtheorem{lem}[thm]{\protect\lemmaname}
\theoremstyle{plain}
\providecommand{\assumptionname}{Assumption}
\providecommand{\examplename}{Example}
\providecommand{\lemmaname}{Lemma}
\providecommand{\propositionname}{Proposition}
\providecommand{\corollaryname}{Corollary}
\providecommand{\remarkname}{Remark}
\providecommand{\theoremname}{Theorem}
\DeclareMathOperator{\E}{{\mathbb E}}
\DeclareMathOperator{\D}{{\mathbb D}}
\DeclareMathOperator{\R}{{\mathbb R}}
\DeclareMathOperator{\N}{{\mathbb N}}
\DeclareMathOperator{\Q}{{\mathbb Q}}
\DeclareMathOperator{\dom}{dom}
\renewcommand{\phi}{\varphi}
\renewcommand{\theta}{\vartheta}
\renewcommand{\subset}{\subseteq}
\providecommand{\abs}[1]{\lvert #1 \rvert}
\providecommand{\norm}[1]{\lVert #1 \rVert}
\providecommand{\bnorm}[1]{{\Bigl\lVert #1 \Bigr\rVert}}
\providecommand{\wraum}{$(\Omega,{\scr F},\PP)$}
\providecommand{\fwraum}{$(\Omega,{\scr F},\PP,({\scr F}_t))$}
\newtheorem*{assumption*}{\assumptionnumber}\providecommand{\assumptionnumber}{}
\newenvironment{assumSigmaB}[3]{%
  \renewcommand{\assumptionnumber}{Assumption $#1$($#2$;$#3$)}%
  \begin{assumption*}%
 \protected@edef\@currentlabel{$#1$($#2$;$#3$)}%
 }{%
  \end{assumption*}
 }
\newcommand{\aswithargsref}[4]{\ref{#1}}
\providecommand{\corollaryname}{Corollary}
\providecommand{\examplename}{Example}
\providecommand{\lemmaname}{Lemma}
\providecommand{\propositionname}{Proposition}
\providecommand{\remarkname}{Remark}
\providecommand{\theoremname}{Theorem}
\begin{document}
\global\long\def\epsilon{\varepsilon}

\global\long\def\E{\mathbb{E}}

\global\long\def\I{\mathbf{1}}

\global\long\def\N{\mathbb{N}}

\global\long\def\R{\mathbb{R}}

\global\long\def\C{\mathbb{C}}

\global\long\def\Q{\mathbb{Q}}

\global\long\def\P{\mathbb{P}}

\global\long\def\D{\Delta_{n}}

\global\long\def\dom{\operatorname{dom}}

\global\long\def\b#1{\mathbb{#1}}

\global\long\def\c#1{\mathcal{#1}}

\global\long\def\s#1{{\scriptstyle #1}}

\global\long\def\u#1#2{\underset{#2}{\underbrace{#1}}}

\global\long\def\r#1{\xrightarrow{#1}}

\global\long\def\mr#1{\mathrel{\raisebox{-2pt}{\ensuremath{\xrightarrow{#1}}}}}

\global\long\def\t#1{\left.#1\right|}

\global\long\def\l#1{\left.#1\right|}

\global\long\def\f#1{\lfloor#1\rfloor}

\global\long\def\sc#1#2{\langle#1,#2\rangle}

\global\long\def\abs#1{\lvert#1\rvert}

\global\long\def\bnorm#1{\Bigl\lVert#1\Bigr\rVert}

\global\long\def\wraum{(\Omega,\c F,\P)}

\global\long\def\fwraum{(\Omega,\c F,\P,(\c F_{t}))}

\global\long\def\norm#1{\lVert#1\rVert}

\begin{frontmatter}

%% Title, authors and addresses

%% use the tnoteref command within \title for footnotes;
%% use the tnotetext command for theassociated footnote;
%% use the fnref command within \author or \address for footnotes;
%% use the fntext command for theassociated footnote;
%% use the corref command within \author for corresponding author footnotes;
%% use the cortext command for theassociated footnote;
%% use the ead command for the email address,
%% and the form \ead[url] for the home page:
%% \title{Title\tnoteref{label1}}
%% \tnotetext[label1]{}
\author{Randolf Altmeyer}
%\corref{cor1}\fnref{label2}}
\ead{ra591@maths.cam.ac.uk}
%% \ead[url]{home page}
%% \fntext[label2]{}
%% \cortext[cor1]{}
\address{University of Cambridge, United Kingdom}
%\address{Statistical Laboratory\\Department of Pure Mathematics 
%and Mathematical Statistics\\University of Cambridge\\
%CB3 0WB Cambridge\\United Kingdom}
%Address\fnref{label3}}
%% \fntext[label3]{}

\title{Central limit theorems for discretized occupation time functionals}

\begin{abstract}
The approximation of integral type functionals  is studied for discrete observations of a continuous Itô semimartingale. Based on novel  approximations in the Fourier domain, central limit theorems are proved for $L^2$-Sobolev functions  with fractional smoothness. An explicit $L^2(\P)$-lower bound shows  that already lower order quadrature rules, such as  the trapezoidal rule and the classical Riemann estimator, are rate optimal, but only the trapezoidal rule is efficient, achieving the minimal asymptotic variance.
\end{abstract}

\begin{keyword}
occupation time \sep semimartingale \sep integral functionals \sep Sobolev spaces \sep lower bound

\MSC 60F05 \sep 60G99 \sep 65D32

\end{keyword}

\end{frontmatter}

\section{Introduction}

For $T>0$ let $X=(X_{t})_{0\leq t\leq T}$ be an $\R^{d}$-valued
continuous Itô semimartingale on a filtered probability space $(\Omega,\c F,(\c F_{t})_{0\leq t\leq T},\P)$.
Consider the approximation of the \emph{occupation time functional}
\[
\Gamma_{t}\left(f\right)=\int_{0}^{t}f\left(X_{r}\right)dr,\,\,\,\,0\leq t\leq T,
\]
for a function $f$ from discrete observations $X_{t_{k}}$ at equidistant times
$t_{k}=k\Delta_{n}$, where $\Delta_{n}=T/n$ and $k\in\{0,\dots,n\}$.
This discretization problem appears naturally in numerical analysis
and statistics for stochastic processes. The mathematical challenge
is to determine an optimal approximation method and the rate at which
convergence takes place.  A canonical choice is the Riemann estimator
\[
\widehat{\Gamma}_{t,n}(f)=\Delta_{n}\sum_{k=1}^{\lfloor t/\D\rfloor}f(X_{t_{k-1}}),
\]
which has been analysed in several recent papers to obtain weak and strong $L^p(\P)$-approximations of $\Gamma_{t}(f)$ for non-smooth functions $f$ with rates of convergence depending on the properties of the process $X$ and on the regularity of $f$, cf.  \cite{Altmeyer2017, Altmeyer:2017dza,  Ganychenko2015, Kohatsu-Higa2014}. Central limit theorems for occupation and local times have been shown in \cite{ivanovs2022optimal}.  

For smooth $f$ more can be said, because then the process $t\mapsto f(X_t)$ is again a continuous Itô semimartingale.  It is well-known (e.g., \cite{gobet2007discrete}) that in this case the weak approximation error is of order $O(\Delta_n)$.  A central limit theorem for $\Delta_n^{-1}(\Gamma_{\Delta_n\lfloor t/\Delta_n\rfloor}(f)-\widehat{\Gamma}_{t,n}(f))$ was obtained in \citep[Chapter 6]{jacod2011discretization} with the weak limit depending only on $\nabla f$. This suggests that the central limit theorem might also hold for less smooth functions, but the proof of \citep{jacod2011discretization} relies crucially on Itô's formula and is therefore restricted to $f\in C^{2}(\R^{d})$.  

The goal of this work is to prove a central limit theorem for the Riemann estimator for general Itô processes in $\R^d$ and under minimal assumptions on the function $f$ such that $t\mapsto f(X_t)$
is not necessarily  a semimartingale.   Related to the classical work
of \citep{Geman1980} on occupation densities, the central idea is
to express the error $\Gamma_{\Delta_n\lfloor t/\Delta_n\rfloor}(f)-\widehat{\Gamma}_{t,n}(f)$ in
terms of the Fourier transform of $f$ and the complex exponentials
$e^{i\sc u{X_{t}}}$ for a frequency $u\in\R^d$.  The analysis of this requires approximating $X_t$ depending on $u$ and is inspired
by the one-step Euler approximations of \citep{Fournier:2010ed},  
applied here to a discretization problem different from the usual
piecewise constant approximation in time of semimartingales.  The approximation problem is therefore moved
to the frequency domain and regularity of $f$ is measured in the fractional $L^{2}$-Sobolev sense.  Under regularity assumptions on the coefficients of $X$ and assuming an additive perturbation by an independent random variable $\xi$ having bounded Lebesgue density we extend
the central limit theorem to $f\in H^{s}(\R^{d})$, $s\leq 2$, at 
the same rate $\Delta_{n}$.  If $X$ has independent increments, then this applies to $f\in H^1(\R^d)$.  The
proof ideas for the central limit theorem have also been applied in \citep{altmeyer2017fourier} to obtain generalized Itô formulas for functions $f$ with fractional Sobolev regularity.  We consider here only continuous Itô semimartingales, but extensions to more general processes including jumps seem possible.  

One might wonder if the rate of convergence $\Delta_{n}$ can be improved
using different estimators or quadrature rules.  From a probabilistic
point of view, a natural estimator is the conditional expectation
$\E[\Gamma_{t}(f)|\mathcal{G}_{n}]$, where $\c G_{n}=\sigma(X_{t_{k}}:k\in\{0,\dots,n\})$
is the sigma field generated by the data.  While there is generally no analytic expression for $\E[\Gamma_{t}(f)|\c G_{n}]$, it is a classical result in probabilistic numerics that it is given by the trapezoidal rule
\begin{align}
\widehat{\Theta}_{t,n}(f)=\Delta_{n}\sum_{k=1}^{\lfloor t/\Delta_{n}\rfloor}\frac{f(X_{t_{k-1}})+f(X_{t_{k}})}{2}\label{eq:trepzoidal}
\end{align}
if $f$ is the identity function and $X$ is a Brownian motion  \citep{diaconis1988bayesian}.  We show that the trapezoidal rule also satisfies a central limit theorem at the rate $\Delta_n$.  By proving an $L^2(\P)$ lower bound on the estimation error when $X$ is a Brownian motion we show that both $\widehat{\Gamma}_{t,n}(f)$ and  $\widehat{\Theta}_{t,n}(f)$ are rate-optimal and that the latter is also efficient in the sense that its asymptotic variance coincides with the minimal $L^2(\P)$ estimation error.  Related lower bounds for integral functionals for less smooth functions $f$ and local times have been obtained by \citep{Altmeyer2017, altmeyer2021optimal}.

The paper is organized as follows. In Section 2 we review the CLT for $f\in C^{2}(\R^{d})$ and extend it in Section 3 to $f$ with fractional Sobolev regularity.  Several special cases are studied to explore or relax the used assumptions. Section 4 presents the lower bound.  Proofs of the main results are deferred to Section 5.

Let us introduce some notation. $C$ always denotes a positive
absolute constant, which may change from line to line. We write $a\lesssim b$
for $a\leq Cb$ and $Y_{n}=o_{\P}(a_{n})$ if $a_{n}^{-1}Y_{n}\r{\P}0$
as $n\rightarrow\infty$ for a sequence of random variables $(Y_{n})_{n\geq1}$ and real numbers $(a_{n})_{n\geq1}$. If $Z^{(n)}$ and $Z$ are stochastic processes
on $[0,T]$, then $Z^{(n)}_{t}\r{ucp}Z_{t}$ means $\sup_{0\leq t\leq T}|(Z_{n})_{t}-Z_{t}|\r{\P}0$.  Stable convergence in law is denoted by $Z^{(n)}_t\r{st}Z_t$ and may refer, depending on the context,  to convergence at a fixed time $0\leq t\leq T$ or to functional convergence on the Skorokhod space $\c D([0,T],\R^{d})$.  For details on stable convergence the reader is referred to \citep{jacod2013limit}.

\section{\label{sec:CLT}Central limit theorems for $f\in C^2(R^d)$}

Recall (for example from \citep{jacod2011discretization}) that the Itô semimartingale $X$ can be realised as
\begin{equation}
X_{t}=X_{0}+\int_{0}^{t}b_{r}dr+\int_{0}^{t}\sigma_{r}dW_{r},\quad0\leq t\leq T,\label{eq:semimartingale equation for X}
\end{equation}
where $X_{0}$ is $\c F_{0}$-measurable, $(W_{t})_{0\leq t\leq T}$
is a standard $d$-dimensional Brownian motion, $b=(b_{t})_{0\leq t\leq T}$
is a locally bounded $\R^{d}$-valued process and $\sigma=(\sigma_{t})_{0\leq t\leq T}$
is a càdlàg $\R^{d\times d}$-valued process, all adapted to $(\c F_{t})_{0\leq t\leq T}$. 

For $f\in C^2(\R^d)$ the process $(f(X_t))_{0\leq t\leq T}$ is again a continuous Itô semimartingale. The following result is Theorem 6.1.2 in \citep{jacod2011discretization}, which itself is based on \citep{jacod2003asymptotic}. 
\begin{thm}
\label{thm:CLT_C2} For $f\in C^{2}(\R^{d})$ we have as $n\rightarrow\infty$ the stable convergence
\begin{equation}
\Delta_{n}^{-1}(\Gamma_{\Delta_n\lfloor t/\Delta_n\rfloor}(f)-\widehat{\Gamma}_{t,n}(f))\r{st}\frac{f(X_{t})-f(X_{0})}{2}+\frac{1}{\sqrt{12}}\int_{0}^{t}\sc{\nabla f(X_{r})}{\sigma_{r}d\widetilde{W}_{r}}\label{eq:stab_clt}
\end{equation}
as processes on the Skorokhod space $\c D([0,T],\R^{d})$,
where $\widetilde{W}$ is a $d$-dimensional Brownian motion, defined
on an independent extension of $(\Omega,\c F,(\c F_{t})_{0\leq t\leq T},\P)$.
\end{thm}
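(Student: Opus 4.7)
The plan is to extract an exact decomposition of $\Gamma_{t}(f)-\widehat{\Gamma}_{t,n}(f)$ via It\^{o}'s formula and to apply the stable CLT for triangular arrays (Theorem IX.7.28 of \citep{jacod2013limit}) to the resulting martingale remainder. On each subinterval $[t_{k-1},t_{k}]$, integration by parts gives
\[
\int_{t_{k-1}}^{t_{k}}(f(X_{r})-f(X_{t_{k-1}}))\,dr=\int_{t_{k-1}}^{t_{k}}(t_{k}-r)\,df(X_{r}),
\]
because both boundary terms vanish. Substituting $df(X_{r})=\langle\nabla f(X_{r}),\sigma_{r}dW_{r}\rangle+A_{r}dr$ with $A_{r}=\langle\nabla f(X_{r}),b_{r}\rangle+\tfrac{1}{2}\operatorname{tr}(\sigma_{r}\sigma_{r}^{\top}\nabla^{2}f(X_{r}))$ and writing $t_{k}-r=\Delta_{n}/2+\varphi_{k}(r)$ for the centered weight $\varphi_{k}(r)=t_{k}-r-\Delta_{n}/2$, summation over $k$ combined with It\^{o}'s formula applied globally yields
\[
\Delta_{n}^{-1}(\Gamma_{t}(f)-\widehat{\Gamma}_{t,n}(f))=\tfrac{1}{2}(f(X_{t})-f(X_{0}))+M_{n}(t)+D_{n}(t)+o_{\P}(1),
\]
with the remainders
\[
M_{n}(t)=\Delta_{n}^{-1}\sum_{k}\int_{t_{k-1}}^{t_{k}}\varphi_{k}(r)\langle\nabla f(X_{r}),\sigma_{r}dW_{r}\rangle,\qquad D_{n}(t)=\Delta_{n}^{-1}\sum_{k}\int_{t_{k-1}}^{t_{k}}\varphi_{k}(r)A_{r}\,dr.
\]
The deterministic piece $\tfrac{1}{2}(f(X_{t})-f(X_{0}))$ is already the bias term in \eqref{eq:stab_clt}.

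The drift remainder $D_{n}(t)$ vanishes in probability: using $\int_{t_{k-1}}^{t_{k}}\varphi_{k}(r)dr=0$ its integrand can be replaced by $\varphi_{k}(r)(A_{r}-A_{t_{k-1}})$, and local boundedness of $b$, the c\`{a}dl\`{a}g property of $\sigma$ and the continuity of $X$ give a vanishing modulus of continuity of $A$ on $\Delta_{n}$-intervals which, combined with $|\varphi_{k}|\le\Delta_{n}/2$, leads to a uniform $o(1)$ bound. For $M_{n}$, writing $M_{n}(t)=\int_{0}^{t}H_{n}(r)\langle\nabla f(X_{r}),\sigma_{r}dW_{r}\rangle$ with $H_{n}(r)=\Delta_{n}^{-1}\varphi_{k}(r)$ on $[t_{k-1},t_{k})$, Theorem IX.7.28 reduces to four standard checks: (i) $\langle M_{n}\rangle_{t}\to\tfrac{1}{12}\int_{0}^{t}|\sigma_{r}^{\top}\nabla f(X_{r})|^{2}dr$ in probability, since $\int_{0}^{\Delta_{n}}(\Delta_{n}/2-u)^{2}du=\Delta_{n}^{3}/12$ and $r\mapsto|\sigma_{r}^{\top}\nabla f(X_{r})|^{2}$ is right-continuous; (ii) asymptotic orthogonality with $W$, $\langle M_{n},W^{j}\rangle_{t}\to 0$, inherited from the zero-mean property of $\varphi_{k}$; (iii) the Lindeberg condition, which follows from the Burkholder--Davis--Gundy bound $\E|\xi_{k}|^{4}=O(\Delta_{n}^{2})$ with $\xi_{k}=\int_{t_{k-1}}^{t_{k}}H_{n}\langle\nabla f(X_{r}),\sigma_{r}dW_{r}\rangle$, so $\sum_{k}\E|\xi_{k}|^{4}=O(\Delta_{n})\to 0$; and (iv) orthogonality with bounded jump martingales, automatic by continuity of $X$. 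Together these yield the stable convergence of $M_{n}$ on $\c D([0,T],\R^{d})$ to the target $\tfrac{1}{\sqrt{12}}\int_{0}^{\cdot}\langle\nabla f(X_{r}),\sigma_{r}d\widetilde{W}_{r}\rangle$.

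The constant $1/\sqrt{12}$ is forced by the quadratic-variation computation above and equals the standard deviation of the uniform distribution on $[0,1]$. The main obstacle I foresee is not the martingale CLT itself---its hypotheses are routine---but the process-level upgrade: one needs tightness of $\Delta_{n}^{-1}(\Gamma_{\cdot}(f)-\widehat{\Gamma}_{\cdot,n}(f))$ on the Skorokhod space together with careful handling of the incomplete final subinterval near a generic $t$. Both can be dealt with via a uniform martingale maximal inequality and an Aldous-type criterion, but they require some care in the multidimensional setup.
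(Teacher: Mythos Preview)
Your approach is correct and follows exactly the route the paper indicates: compute the error explicitly via It\^o's formula and then apply Theorem IX.7.28 of \citep{jacod2013limit}; the paper itself gives no details beyond citing this result as Theorem 6.1.2 of \citep{jacod2011discretization}. Two small clarifications: condition (iv) in IX.7.28 concerns \emph{all} bounded martingales orthogonal to $W$, and it holds trivially because your $M_n$ is a stochastic integral against $W$ (not because $X$ is continuous); and your final worry about a separate ``process-level upgrade'' is unnecessary, since Theorem IX.7.28 already delivers functional stable convergence on the Skorokhod space, so no extra tightness or Aldous-type argument is required.
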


The proof of the CLT is based on the decomposition 
\begin{equation}
\Gamma_{\Delta_n\lfloor t/\Delta_n\rfloor}(f)-\widehat{\Gamma}_{t,n}(f)=M_{t,n}(f)+D_{t,n}(f)+E_{t,n}(f)\label{eq:decomposition}
\end{equation}
with
\begin{align*}
M_{t,n}(f) & =\sum_{k=1}^{\lfloor t/\Delta_{n}\rfloor}\int_{t_{k-1}}^{t_{k}}(f(X_{r})-\E[f(X_{r})|\c F_{t_{k-1}}])dr,\\
D_{t,n}(f) & =\sum_{k=1}^{\lfloor t/\Delta_{n}\rfloor}\int_{t_{k-1}}^{t_{k}}\E\left[f(X_{r})-f(X_{t_{k-1}})-\frac{f(X_{t_k})-f(X_{t_{k-1}})}{2}\bigg|\c F_{t_{k-1}}\right]dr,\\
E_{t,n}(f) & =\frac{\Delta_{n}}{2}\sum_{k=1}^{\lfloor t/\Delta_{n}\rfloor}\E[f(X_{t_{k}})-f(X_{t_{k-1}})|\c F_{t_{k-1}}].
\end{align*}
The proof proceeds by applying a standard CLT for triangular arrays of martingale differences (cf. Theorem IX.7.28 of [13]) to $(M_{t,n}(f))_{0\leq t\leq T}$, while the limit process of $(E_{t,n}(f))_{0\leq t\leq T}$ yields the asymptotic bias in \eqref{eq:stab_clt}.  For $f\in C^2(\R^d)$, $(D_{t,n}(f))_{0\leq t\leq T}$ is shown to be asymptotically negligible by Itô's formula.

\begin{rem}[Trapezoidal rule]
The \emph{trapezoidal rule} from \eqref{eq:trepzoidal} satisfies
\[
\widehat{\Theta}_{t,n}(f)=\widehat{\Gamma}_{t,n}(f)+\Delta_{n}\frac{f(X_{\lfloor t/\Delta_{n}\rfloor\Delta_{n}})-f(X_{0})}{2}.
\]
Since $f(X_{\lfloor t/\Delta_{n}\rfloor\Delta_{n}})$ converges uniformly
to $f(X_{t})$, the
CLT in Theorem \ref{thm:CLT_C2} provides us also with a functional
CLT for $\widehat{\Theta}_{t,n}(f)$:
\begin{equation}
\Delta_{n}^{-1}(\Gamma_{\Delta_n\lfloor t/\Delta_n\rfloor}(f)-\widehat{\Theta}_{t,n}(f))\r{st}\frac{1}{\sqrt{12}}\int_{0}^{t}\sc{\nabla f(X_{r})}{\sigma_{r}d\widetilde{W}_{r}}.\label{eq:trapezoid_clt}
\end{equation}
The trapezoidal rule achieves the same rate as the
Riemann estimator, but is asymptotically unbiased, as opposed to (\ref{eq:stab_clt}).
Moreover,  Theorem \ref{thm:lowerBound_H1} 
below shows for a Brownian motion $X$ that the trapezoidal rule is \emph{efficient} in the sense that it attains the minimal asymptotic variance among all square-integrable estimators for $\Gamma_t(f)$ from the observations $X_{t_k}$. For simplicity, we consider
in the following only $\widehat{\Gamma}_{t,n}(f)$, but results transfer
to $\widehat{\Theta}_{t,n}(f)$. 
\end{rem}

\section{\label{sec:CLT_Hs}Central limit theorems for $f\in H^s(R^d)$} 

If $f$ is not smooth,  then $(f(X_t))_{0\leq t\leq T}$ is generally not a semimartingale and it is not clear if the strategy from the last section still applies to prove a CLT.  Inspired by the observation that the limit process in (\ref{eq:stab_clt}) requires formally only a (weak) derivative for $f$, we aim now at deriving a CLT for functions in the fractional Sobolev space of regularity $s\geq0$
\begin{align*}
	H^s(\R^d) &= \{f:\norm{f}_{L^2} + \norm{f}_{H^s}<\infty\},\quad \norm{f}^2_{H^s} = \int_{\R^{d}}\left|\c Ff\left(u\right)\right|^{2}\left|u\right|^{2s}du,
\end{align*}
where $\c Ff$ is the Fourier transform, which for $f\in L^{1}(\R^{d})\cap L^{2}(\R^{d})$ is defined as $\c Ff(u)=\int_{\R^{d}}f(x)e^{i\sc ux}dx$, $u\in\R^{d}$.   We further say that $f\in H_{loc}^{s}(\R^{d})$, if $f\cdot\varphi\in H^{s}(\R^{d})$
for all smooth and compactly supported $\varphi\in C_{c}^{\infty}(\R^{d})$. It is well-known that
$C^{k}(\R^{d})\subset H_{loc}^{k}(\R^{d})$ for $k\in\N$ and the Sobolev embedding implies $H_{loc}^{s}(\R^{d})\subset C^{k}(\R^{d})$ if $s>d/2+k$,  cf. Section 2.7 of \citep{triebel2010theory}. 

A key assumption in this section is to consider instead of $X$ the process $X+\xi$ with an independent random variable $\xi$.   $L^2(\P)$ bounds on the terms in \eqref{eq:decomposition} will be obtained from the following simple lemma. 

\begin{lem}\label{lem:plancherel} Let $\xi$ be a random
variable, independent of the filtration $\c F$ with bounded Lebesgue
density. Suppose that $h(x)\equiv h(X,x)$, $x\in\R^d$, is a family of random variables such that $x\mapsto h(x)$ is Borel-measurable and such that $\P$-almost surely $h \in L^2(\R^d)$. Then $$\E[h^2(\xi)] \lesssim \int_{\R^d} \E[|h(u)|^2] du = (2\pi)^{-d} \int_{\R^d} \E[|\mathcal{F}h(u)|^2] du.$$
\end{lem}

\begin{proof}
By the independence of $X$ and $\xi$ we have 
\begin{align*}
	\E[h^2(\xi)] = \E\left[\E[h^2(\xi)|X]\right] \lesssim \E\left[ \int_{\R^d} h^2(x) dx\right]=\int_{\R^d} \E[|h(u)|^2] du,\quad u\in\R^d.
\end{align*}
The claim follows therefore from the Plancherel theorem.
\end{proof}
Introducing the Fourier transform in this way,  combined with the shift property of the Fourier transform $$\c Ff(X_t+\cdot)(u) = \c Ff(u)e^{-i\langle u,X_t\rangle},$$allows for separating the function $f$ from the process $X$.  This decomposition leads naturally to an analysis depending on the fractional Sobolev regularity of $f$ and on the characteristic function of the marginals of $X$.  Since the latter are usually not known explicitly,  $X$ is approximated depending on the frequency $u$.  For this we make the following assumptions, cf. \citep{Jacod:2015gx}.

\begin{assumSigmaB}{S}{\alpha}{\beta}\label{assu:sigmaB} Let $0\leq\alpha,\beta\leq1$.
There exists an increasing sequence of stopping times $(\tau_{R})_{R\geq1}$
with $\tau_{R}\rightarrow\infty$ for $R\rightarrow\infty$ such that
for all $0\leq s,t\leq T$ with $t+s\leq T$ 
\begin{align*}
\E\left[\sup_{0\leq r\leq s}\left|\sigma_{\left(t+r\right)\wedge\tau_{R}}-\sigma_{t\wedge\tau_{R}}\right|^{2}\right] & \leq Cs^{2\alpha},\,\,\,\,\E\left[\sup_{0\leq r\leq s}\left|b_{\left(t+r\right)\wedge\tau_{R}}-b_{t\wedge\tau_{R}}\right|^{2}\right]\leq Cs^{2\beta}.
\end{align*}
Moreover, $\sup_{0\leq t\leq T}\left|(\sigma_{t}\sigma_{t}^{\top})^{-1}\right|<\infty$
$\P$-almost surely.
\end{assumSigmaB}

The non-degeneracy of $\sigma_{t}\sigma_{t}^{\top}$ is a technical
condition and can probably be relaxed (it
is not necessary in Theorem \ref{thm:CLT_C2}).  For Itô semimartingales $b$ and $\sigma$ we have $\alpha=\beta=1/2$,  which also allows for non-predictable jumps.  Other
important examples are $\sigma$ and $b$ with $\alpha$- and $\beta$-Hölder continuous paths and with integrable Hölder constants, for instance with $\alpha<H$ when $\sigma$ is a fractional Brownian motion of Hurst index $0<H<1$. 

We can now formulate our first main result.
\begin{thm}
\label{thm:CLT_Hs} Let $s\geq 1$ and grant Assumption \aswithargsref{assu:sigmaB}{S}{\alpha}{\beta} with
$\alpha>\max(0,1-s/2)$, $\beta>0$.  Let $\xi$ be a random
variable, independent of the filtration $\c F$ with bounded Lebesgue
density. Then we have for $0\leq t\leq T$ and $f\in H^{s}(\R^{d})$
(or $f\in H_{loc}^{s}(\R^{d})$ and $\xi$ bounded) the stable convergence
\begin{align*}
 & \Delta_{n}^{-1}\left(\Gamma_{\Delta_n\lfloor t/\Delta_n\rfloor}\left(f(\cdot+\xi)\right)-\widehat{\Gamma}_{t,n}\left(f(\cdot+\xi)\right)\right)\\
 & \qquad\r{st}\frac{f(X_{t}+\xi)-f(X_{0}+\xi)}{2}+\frac{1}{\sqrt{12}}\int_{0}^{t}\sc{\nabla f(X_{r}+\xi)}{\sigma_{r}d\widetilde{W}_{r}},
\end{align*}
as $n\rightarrow\infty$, where $\widetilde{W}$ is as in Theorem \ref{thm:CLT_C2}.
\end{thm}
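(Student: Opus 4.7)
My approach is to lift Theorem \ref{thm:CLT_C2} from $C^2$ to $H^s$ by Fourier-analytic approximation. The right-hand side of the claimed stable convergence is continuous in $f$ with respect to $\norm{\cdot}_{H^s}$ (the stochastic integral by the It\^o isometry applied to $\nabla f\in L^2$, the boundary piece $\tfrac12(f(X_t+\xi)-f(X_0+\xi))$ via $L^2(\R^d)$-convergence combined with the bounded Lebesgue density of $X_t+\xi$), so by a standard diagonal argument for stable convergence (cf.\ Proposition~2.2.4 of \citep{jacod2013limit}) it suffices to prove two ingredients: (a) the stable convergence along a smooth approximating sequence $f_\varepsilon\in C^\infty_c(\R^d)$ with $f_\varepsilon\to f$ in $H^s$, which is immediate from Theorem \ref{thm:CLT_C2}; and (b) a uniform remainder bound of the type
\begin{equation*}
\sup_n\E\Bigl[\bigl|\Delta_n^{-1}\bigl(\Gamma_t(g(\cdot+\xi))-\widehat{\Gamma}_{t,n}(g(\cdot+\xi))\bigr)\bigr|^2\,\I_{\{T\le\tau_R\}}\Bigr]\le C_R\norm{g}_{H^s}^2
\end{equation*}
for $g=f-f_\varepsilon$, with the stopping times $\tau_R$ from Assumption \aswithargsref{assu:sigmaB}{S}{\alpha}{\beta}. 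This estimate converts $H^s$-closeness into smallness of the discretization remainder and closes the argument.

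The estimate (b) is where Fourier enters. Writing $e_u(x):=e^{i\langle u,x\rangle}$ and using Fourier inversion,
\begin{equation*}
\Gamma_t(g(\cdot+\xi))-\widehat{\Gamma}_{t,n}(g(\cdot+\xi))=(2\pi)^{-d}\int_{\R^d}\c Fg(u)\,e^{i\langle u,\xi\rangle}\,D_n^t(u)\,du,\ D_n^t(u):=\Gamma_t(e_u)-\widehat{\Gamma}_{t,n}(e_u),
\end{equation*}
so squaring and taking expectation produces a double integral with kernel $K_n(u,v):=\Delta_n^{-2}\E[D_n^t(u)\overline{D_n^t(v)}]\,\c Fp(u-v)$, where $p$ is the density of $-\xi$. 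It\^o's formula applied to each $C^2$ function $e_u$ together with Fubini (the same identity underlying Theorem \ref{thm:CLT_C2}) decomposes $D_n^t(u)$ into a martingale piece of order $\Delta_n|u|$, a drift piece involving $\langle u,b_r\rangle$, and an It\^o-correction piece of order $u^\top\sigma_r\sigma_r^\top u$. I would then establish a two-regime pointwise bound on $K_n$: for $|u|\vee|v|\le\Delta_n^{-1/2}$ the direct expansion gives $|K_n(u,v)|\lesssim(1+|u|\vee|v|)^2|\c Fp(u-v)|$, while for $|u|\vee|v|>\Delta_n^{-1/2}$ the Gaussian-type smoothing
\begin{equation*}
\bigl|\E[e^{i\langle u,X_r-X_s\rangle}\mid\c F_s]\bigr|\le e^{-c(r-s)|u|^2}\text{ on }\{T\le\tau_R\},
\end{equation*}
derived from It\^o's formula for $e_u$ and the $\P$-a.s.\ non-degeneracy of $\sigma_s\sigma_s^\top$, extracts the exponential decay that overpowers the quadratic $|u|^2$ from the It\^o correction. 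Cauchy--Schwarz against the Sobolev weight $(|u||v|)^s$ then bounds the double integral by $C_R\norm g_{H^s}^2$, and the quantitative balance between the It\^o correction and the H\"older approximation $\E|\sigma_r-\sigma_{t_{k-1}}|^2\lesssim(r-t_{k-1})^{2\alpha}$ is precisely what forces $s+2\alpha>2$, i.e.\ $\alpha>\max(0,1-s/2)$.

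The independent shift $\xi$ plays a double role. Globally, the factor $e^{i\langle u,\xi\rangle}$ inside the Fourier representation, after taking expectation with respect to the independent $\xi$, yields the multiplier $\c Fp(u-v)$; its boundedness and decay at infinity (Riemann--Lebesgue, via the bounded density hypothesis) provide the off-diagonal integrability of $K_n(u,v)$ without which the double integral would diverge at these Sobolev indices. Locally, when $f\in H^s_{loc}$ and $\xi$ is bounded, one picks a deterministic compact $K$ containing the image of $X+\xi$ on $[0,T\wedge\tau_R]$, multiplies $f$ by a $C^\infty_c$ cutoff equal to $1$ on $K$, and reduces to the global case. The main obstacle I anticipate is the high-frequency estimate on $K_n(u,v)$: extracting enough Gaussian smoothing to beat the $|u|^2$ in the It\^o correction while simultaneously absorbing the H\"older regularity losses of $\sigma$ and $b$ (controlled by $\alpha$ and $\beta$) is the delicate step that dictates the sharp threshold $\alpha>1-s/2$ and is the heart of the proof.
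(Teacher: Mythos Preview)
Your overall strategy—reduce to Theorem~\ref{thm:CLT_C2} via a smooth approximation $f_\varepsilon\to f$ in $H^s$ and a uniform-in-$n$ remainder bound—is a legitimate alternative to what the paper does, and you correctly locate the decisive trade-off $\alpha>1-s/2$ in the It\^o-correction term. The paper, however, proceeds differently: it splits the error \emph{directly} as $M_{t,n}+D_{t,n}$ with
\[
M_{t,n}=\sum_k\!\int_{t_{k-1}}^{t_k}\!\bigl(f(Y_r)-\E[f(Y_r)\mid\c F_{t_{k-1}}]\bigr)dr,\quad
D_{t,n}=\sum_k\!\int_{t_{k-1}}^{t_k}\!\E\bigl[f(Y_r)-f(Y_{t_{k-1}})\mid\c F_{t_{k-1}}\bigr]dr,
\]
proves a martingale CLT for $\Delta_n^{-1}M_{t,n}$ under only $f\in H^1$ (no approximation, no diagonal argument), and reserves the Fourier machinery for showing $\Delta_n^{-1}D_{t,n}\to\tfrac12(f(Y_t)-f(Y_0))-\tfrac12\int\langle\nabla f(Y_r),\sigma_rdW_r\rangle$ in probability. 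Your route folds both pieces into a single kernel estimate, which is conceptually tidy but forces you to carry the high-frequency analysis through the martingale part as well.

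There are two concrete gaps. First, your use of $\xi$ via the off-diagonal factor $\c Fp(u-v)$ with only Riemann--Lebesgue decay does not obviously close: bounded density gives $\c Fp\in L^2\cap L^\infty$, not $L^1$, and Young's inequality then falls short of $C\norm g_{H^s}^2$. The paper avoids this entirely by the elementary bound $\E_\xi[|h(\xi)|^2]\le\norm p_\infty\norm h_{L^2}^2$ followed by Plancherel in the shift variable, which \emph{diagonalizes} the problem to a single integral $\int|\c Ff(u)|^2\E[|D_{t,n}(e_u,X)-E_{t,n}(e_u,X)|^2]\,du$; see (\ref{eq:D_E_error}). This is both simpler and what the bounded-density hypothesis is actually for.

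Second, and more seriously, the bound $|\E[e^{i\langle u,X_r-X_s\rangle}\mid\c F_s]|\le e^{-c(r-s)|u|^2}$ does \emph{not} follow from ``It\^o's formula and non-degeneracy of $\sigma_s\sigma_s^\top$'' when $\sigma$ is random and adapted: the conditional law of $X_r-X_s$ given $\c F_s$ is not Gaussian and the integral equation for the conditional characteristic function does not factor. This is precisely the hard step. The paper handles it by freezing the coefficients (one-step Euler, after \citep{Fournier:2010ed}): for $\widetilde X_r(s)=X_s+b_s(r-s)+\sigma_s(W_r-W_s)$ the bound is exact, and the discrepancy $X_r-\widetilde X_r(s)$ is controlled by the H\"older exponents $\alpha,\beta$ (Lemma~\ref{lem:estimatesForSM}). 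The analysis of the It\^o-correction piece $F_{t,n}^{(2)}(u)$ then runs on a $u$-dependent auxiliary grid of mesh $\epsilon=\epsilon(u,\Delta_n)\sim |u|^{-2}\log(1+|u|^6\Delta_n^{1/2})$ (Lemma~\ref{lem:F_2_bound_SM}), which is the mechanism that converts the frozen Gaussian decay into the uniform bound $(1+|u|^2)^s g_n^{(2)}(u)$ with $g_n^{(2)}\to0$. Your one-line claim papers over exactly this, and without it the condition $\alpha>1-s/2$ has nowhere to enter.
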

We conclude that the Riemann estimator satisfies the CLT  in Theorem  \ref{thm:CLT_C2}  at the optimal rate $\Delta_{n}$ for Sobolev-smooth functions.  Compared to Theorem \ref{thm:CLT_C2}, here the stable convergence holds only at a fixed time $t$, because it is difficult to control the term $D_{t,n}(f)$ in \eqref{eq:decomposition}  uniformly in $t$. It is unclear if this can be achieved for Sobolev functions,  in general.  
Note that the compositions $\nabla f(X_r+\xi)$ are well-defined random variables for $f\in H^1(\R^d)$, because by independence $X_t+\xi$ has a Lebesgue density and so $\nabla f=\nabla \tilde{f}$ almost surely implies $\nabla f(X_r+\xi)=\tilde{f}(X_r+\xi)$ $\P$-almost surely.
 
\begin{rem}[Regularisation by $\xi$]
The additive perturbation by $\xi$ is required in the proofs and is essential to our approach of weakening the regularity conditions on $f$.  
This is conceptually related to the averaging
by noise phenomenon \citep{Catellier2016} by regularising the underlying discretization problem through convolution smoothing.  Indeed,  as discussed above  $\mu$ is the Lebesgue density of $\xi$ and $X_t$ has marginal density $p_t$, then $X_t + \xi$ has density $p_t*\mu$, where $*$ denotes the convolution operator.  Alternatively, $f(X_t+\xi)$ corresponds in average to $f*\mu$.  These two different points of views have been explored in \cite{Altmeyer2017} for the approximation of occupation time functionals and $f\in H^s(\R^d)$, $0\leq s\leq 1$.
\end{rem}

\begin{rem}[Regularity of $X$ and $f$]
Theorem \ref{thm:CLT_Hs} presents a trade-off between the regularity of $X$ and $f\in H^s(\R^d)$.  For $s\leq 2$ the CLT applies as soon as $\beta>0$ and $\alpha + s/2>1$.  This means, the more regular $\sigma$ is, the less regular $f$ can be.  If $\sigma$ is a semimartingale
(and thus $\alpha=1/2$), then we need only $s>1$, while for $s>2$ any $\alpha$ is admissible.  
\end{rem}

If the characteristic functions of the $X_t$ are known explicitly,  then regularity conditions in the CLT and its proof simplify.  For example, for $X$ with independent increments (and thus with deterministic $b$ and $\sigma$) independence of $X_0=\xi$ and $X-X_0$ is trivially true.  In this case we only need to require $b$ and $\sigma$ to be càdlàg functions and the result applies to any $f\in H^1(\R^d)$.

\begin{thm}
\label{thm:CLT_II} Suppose that $X_{0}$ has a bounded Lebesgue density.  Assume that $b,\sigma$ are deterministic càdlàg functions and that $\sup_{0\leq t\leq T}|(\sigma_{t}\sigma_{t}^{\top})^{-1}|<\infty$.
Then we have for $0\leq t\leq T$ and $f\in H^{1}(\R^{d})$ the stable convergence
\begin{align*}
 & \Delta_{n}^{-1}\left(\Gamma_{\Delta_n\lfloor t/\Delta_n\rfloor}\left(f\right)-\widehat{\Gamma}_{t,n}\left(f\right)\right)\r{st}\frac{f(X_t)-f(X_{0})}{2}+\frac{1}{\sqrt{12}}\int_{0}^{t}\sc{\nabla f(X_r)}{\sigma_{r}d\widetilde{W}_{r}},
\end{align*}
as $n\rightarrow\infty$, where $\widetilde{W}$ is as in Theorem \ref{thm:CLT_C2}.
\end{thm}
Since $X_r$ has a Lebesgue density (e.g., by \cite{Romito2017a}), we can argue as after Theorem \ref{thm:CLT_Hs} that $\nabla f(X_r)$ is a well-defined random variable for $f\in H^1(\R^d)$ which depends only on the equivalence class of $f$ and not on the chosen representative.  In dimension $d=1$ the condition on $X_0$ can be removed.
\begin{thm}
\label{thm:CLT_III} If $d=1$, then Theorem \ref{thm:CLT_II} applies to any initial value $X_0$.
\end{thm}

Next,  for integrable $f$ with integrable Fourier transform we obtain a CLT with functional convergence and without $\xi$.  For this define the Fourier Lebesgue spaces of regularity $s\geq 0$
\begin{align*}
FL^s(\R^d) &= \{f:\norm{f}_{L^1} + \norm{f}_{FL^s}<\infty\},\quad \norm{f}_{FL^s} = \int_{\R^{d}}\left|\c Ff\left(u\right)\right|\left|u\right|^{s}du.
\end{align*}

We further say that $f\in FL_{loc}^{s}(\R^{d})$, if $f\cdot\varphi\in FL^{s}(\R^{d})$ for all $\varphi\in C_{c}^{\infty}(\R^{d})$.  

\begin{example}
If $f\in H^{s}(\R^{d})$ and
$\mu\in L^{2}(\R^{d})$, then $f*\mu\in FL^{s}(\R^{d})$.  Moreover, if $f\in H_{loc}^{s'}(\R^{d})$ for $s'>s+d/2$, then $f\in FL_{loc}^{s}(\R^{d})$.
\end{example}

By the Fourier inversion formula it follows $FL^{1}(\R^d)\subset C^1(\R^d)$.  In this sense the next theorem generalises Theorem \ref{thm:CLT_C2} without requiring $f\in C^2(\R^d)$.

\begin{thm}
\label{thm:CLT_Fs} Let $s\geq 1$ and grant Assumption \aswithargsref{assu:sigmaB}{S}{\alpha}{\beta} with
$\alpha>\max(0,1-s/2)$, $\beta>0$.  Then the functional stable
convergence in (\ref{eq:stab_clt}) holds for any $f\in FL_{loc}^{s}(\R^{d})$.
\end{thm}

\section{\label{sec:Lower-bounds}Optimality for Brownian motion}

We study
next the optimality in the $L^{2}(\P)$-sense for estimating $\Gamma_{T}(f)$
at the fixed time $T$ for a given $f\in H^{1}(\R^{d})$. 

Considering for $\Gamma_{T}(f)$ all square-integrable estimators,
which are measurable with respect to the
sigma field $\c G_{n}=\sigma(X_{t_{k}}:0\leq k\leq n)$,   shows that the minimal error is
\[
\inf_{\widehat{\Gamma}}\norm{\Gamma_{T}(f)-\widehat{\Gamma}}_{L^{2}(\P)}=\norm{\Gamma_{T}(f)-\widehat{\Gamma}^{*}}_{L^{2}(\P)},
\]
which is attained by the conditional expectation
$\widehat{\Gamma}^{*}=\E[\Gamma_{T}(f)|\mathcal{G}_{n}]$.  The asymptotic estimation error can be computed explicitly when $X$ is a Brownian motion.
\begin{thm}
\label{thm:lowerBound_H1}Let $X$ be a Brownian motion and let $f\in H^{1}(\R^{d})$.
Suppose that $X_{0}$ has a bounded Lebesgue density or $d=1$.  
Then
\begin{align*}
\lim_{n\rightarrow\infty}\left(\D^{-1}\norm{\Gamma_{T}(f)-\widehat{\Gamma}^{*}}_{L^{2}(\P)}\right) & =\E\left[\frac{1}{12}\int_{0}^{T}|\nabla f(X_{t})|^{2}dt\right]^{1/2}.
\end{align*}
\end{thm}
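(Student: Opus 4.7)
Given the Markov property of Brownian motion, conditional on $\c G_n$ the restrictions of $X$ to the intervals $[t_k,t_{k+1}]$ are independent Brownian bridges between $X_{t_k}$ and $X_{t_{k+1}}$. Setting $Y_k=\int_{t_k}^{t_{k+1}}(f(X_r)-\E[f(X_r)\mid\c G_n])\,dr$, one has $\E[Y_k\mid\c G_n]=0$ and $\E[Y_kY_\ell]=0$ for $k\neq\ell$ by conditional independence, so
$$
\|\Gamma_T(f)-\widehat\Gamma^*\|_{L^2(\P)}^2=\sum_{k=0}^{n-1}\E\!\left[\operatorname{Var}\!\Big(\int_{t_k}^{t_{k+1}}f(X_r)\,dr\,\Big|\,\c G_n\Big)\right].
$$
The plan is to compute each summand to leading order in $\D$ via Fourier analysis.

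For $r=t_k+a\D$, $s=t_k+b\D$ with $0\le a\le b\le 1$, the conditional law of $(X_r,X_s)$ given $\c G_n$ is Gaussian with means $m_a=X_{t_k}+a(X_{t_{k+1}}-X_{t_k})$ and $m_b$ analogously, marginal variances $\D a(1-a)I_d,\,\D b(1-b)I_d$ and cross-covariance $\D a(1-b)I_d$. Combining $f(x)=(2\pi)^{-d}\int\c Ff(u)e^{-i\sc ux}du$ with the corresponding Gaussian characteristic function yields the explicit identity
$$
\operatorname{Cov}(f(X_r),f(X_s)\mid\c G_n)=(2\pi)^{-2d}\!\iint\!\c Ff(u)\c Ff(v)e^{-i\sc u{m_a}-i\sc v{m_b}-\tfrac{\D}{2}(a(1-a)|u|^2+b(1-b)|v|^2)}\bigl(e^{-\D a(1-b)\sc uv}-1\bigr)du\,dv.
$$
Expanding $e^{-\D a(1-b)\sc uv}-1=-\D a(1-b)\sc uv+O(\D^2|u|^2|v|^2)$ and using the Fourier identity $|\nabla f(x)|^2=-(2\pi)^{-2d}\iint\sc uv\,\c Ff(u)\c Ff(v)e^{-i\sc{u+v}x}du\,dv$, the leading contribution to $\E[Y_k^2]$ is
$$
\D^3\E[|\nabla f(X_{t_k})|^2]\int_0^1\!\int_0^1\!\min(a,b)\bigl(1-\max(a,b)\bigr)da\,db=\frac{\D^3}{12}\E[|\nabla f(X_{t_k})|^2].
$$
Summing and invoking Riemann-sum convergence $\D\sum_k\E[|\nabla f(X_{t_k})|^2]\to\E[\int_0^T|\nabla f(X_t)|^2\,dt]$, which is well-defined thanks to the density assumption on $X_0$ for $d\ge 2$ (and via $\sup_{t\in(0,T]}t^{1/2}\|p_t\|_\infty<\infty$ for $d=1$, combined with $\int|\nabla f|^2<\infty$), gives the claimed limit after taking square roots.

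The main obstacle is the rigorous control of the remainder of the exponential expansion for $f\in H^1(\R^d)$ only, where no classical second-order Taylor expansion of $f$ is available. The bound $|e^{-x}-1+x|\le\tfrac12x^2e^{|x|}$ applied to $x=\D a(1-b)\sc uv$ produces an integral of the form $\iint|u|^2|v|^2|\c Ff(u)||\c Ff(v)|e^{\D|u||v|-\tfrac{\D}{2}(a(1-a)|u|^2+b(1-b)|v|^2)}du\,dv$; I would absorb the cross term $\D|u||v|$ into the Gaussian damping via Young's inequality and then apply Cauchy--Schwarz with $\int|u|^2|\c Ff(u)|^2du=\|f\|_{H^1}^2<\infty$, with additional care at the boundary $a,b\in\{0,1\}$ where the damping degenerates. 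Replacing $m_a,m_b$ by $X_{t_k}$ in the leading identification with $|\nabla f(X_{t_k})|^2$ is of order $\D^{1/2}$ and hence lower order. To streamline bookkeeping, one may establish the identity first for Schwartz functions and then extend to $f\in H^1(\R^d)$ by density, using $\|\Gamma_T(h)-\widehat\Gamma^*(h)\|_{L^2}\le\|\Gamma_T(h)-\widehat\Theta_{T,n}(h)\|_{L^2}$ together with the trapezoidal-rule CLT of Remark~\ref{rem:trapezoidal} (via Corollary~\ref{cor:CLT_II}) to ensure the required $H^1$-continuity of both sides.
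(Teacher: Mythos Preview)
Your reduction to uncorrelated increments $Y_k$ via the Brownian-bridge structure matches the paper exactly. From there, however, you and the paper diverge. You compute the conditional covariance $\operatorname{Cov}(f(X_r),f(X_s)\mid\c G_n)$ directly through the joint characteristic function of the bridge and extract the leading term by expanding the cross-covariance factor $e^{-\D a(1-b)\sc uv}-1$. The paper instead linearizes first: it writes $f(X_t)-f(X_{t_{k-1}})=\sc{\nabla f(X_{t_{k-1}})}{X_t-X_{t_{k-1}}}+\text{rem}$, bounds the remainder in $L^2$ by the already-proved Lemma~\ref{lem:estimates_f}(iii), and then computes the conditional variance of the \emph{linear} functional exactly via the bridge mean $\E[X_t-X_{t_{k-1}}\mid X_{t_{k-1}},X_{t_k}]=\frac{t-t_{k-1}}{\D}(X_{t_k}-X_{t_{k-1}})$, stochastic Fubini, and It\^o isometry, obtaining $\frac{\D^3}{12}\E[|\nabla f(X_{t_{k-1}})|^2]$ with no Fourier inversion needed for the main term.

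What each approach buys: the paper's route is short because the remainder is exactly the quantity controlled in Lemma~\ref{lem:estimates_f}(iii), and the linear part is handled by a three-line stochastic-calculus computation; it never needs pointwise Fourier inversion of $f$ (which is not available for general $f\in H^1(\R^d)$). Your route is more self-contained, but the remainder you isolate is genuinely harder: after absorbing $e^{\D a(1-b)|\sc uv|}$ into the bridge damping via $a(1-b)\le\min(a(1-a),b(1-b))$, the residual factor $|u|^2|v|^2|\c Ff(u)||\c Ff(v)|$ is \emph{not} integrable for $f\in H^1$ without further help from the (now degenerate near the diagonal $a=b$) Gaussian weight, and the Cauchy--Schwarz step you describe produces powers of $[\D a(b-a)]^{-(d+2)/4}$ whose $a,b$-integral must be checked carefully in each dimension. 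Your fallback density argument also needs more than the CLT of Corollary~\ref{cor:CLT_II}: stable convergence alone does not give the uniform-in-$n$ bound $\D^{-1}\norm{\Gamma_T(h)-\widehat\Theta_{T,n}(h)}_{L^2(\P)}\lesssim\norm{h}_{H^1}$ required for the approximation; you would have to extract that from the $L^2$ estimates underlying the CLT (essentially Lemma~\ref{lem:estimates_f} again). None of this is fatal, but it explains why the paper's linearize-first strategy is the cleaner path.
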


In view of Theorems \ref{thm:CLT_II} and \ref{thm:CLT_III}, this means that both $\widehat{\Gamma}_{T,n}(f)$ and the trapezoidal rule estimator $\widehat{\Theta}_{T,n}(f)$
are rate optimal for $f\in H^{1}(\R^{d})$ when $X$ is a Brownian
motion, while $\widehat{\Theta}_{T,n}(f)$ is even efficient. In particular,  no other quadrature rule for equidistant observation times can achieve a better rate
by exploiting higher smoothness of $f$.  The minimal asymptotic
$L^2(\P)$ error corresponds exactly to the asymptotic variance from (\ref{eq:trapezoid_clt})
with respect to $\widehat{\Theta}_{T,n}(f)$.  

\section{Proofs}

In the following we rely on well-known properties of the Fourier transform, cf. \cite{adams2003sobolev}. For example,  for $a\in\R$
\begin{align*}
	h\in L^2(\R^d): & \quad \mathcal{F}h(a+\cdot)(u) = \mathcal{F}h(u)e^{-i\langle u,a\rangle},\\
	h\in H^1(\R^d): & \quad \mathcal{F}(\partial_j h)(u) = iu_j \mathcal{F}h(u),\quad j=1,\dots,d.
\end{align*}

\subsection{\label{subsec:Proof-of-Theorem-CLT_Hs}Proof of Theorem \ref{thm:CLT_Hs}}

Throughout write $Y=X+\xi$, where $\xi$ is independent of $X$ and has a bounded Lebesgue density. 

\subsubsection{\label{subsubsec:localization}Localization}

By a well-known localization procedure (cf. Lemma 4.4.9 in \citep{jacod2011discretization})
and Assumption \aswithargsref{assu:sigmaB}{S}{\alpha}{\beta}, it suffices
to prove the CLT under the following stronger Assumption. 

\begin{assumSigmaB}{S_{loc}}{\alpha}{\beta}\label{assu:sigmaB_local}Let
$0\leq \alpha,\beta\leq1$. Then it holds $\P$-a.s.
\[
\sup_{0\leq t\leq T}\left(\left|X_{t}\right|+\left|b_{t}\right|+\left|\sigma_{t}\right|+\left|(\sigma_{t}\sigma_{t}^{\top})^{-1}\right|\right)\leq C,
\]
and for all $0\leq t,t'\leq T$ with $t+t'\leq T$
\begin{align*}
\E\left[\sup_{0\leq r\leq t'}\left|\sigma_{t+r}-\sigma_{t}\right|^{2}\right] & \leq C (t')^{2\alpha},\,\,\,\,\E\left[\sup_{0\leq r\leq t'}\left|b_{t+r}-b_{t}\right|^{2}\right]\leq C (t')^{2\beta}.
\end{align*}
\end{assumSigmaB}

When $f\in H_{loc}^{s}(\R^{d})$ and $\xi$ is bounded, this assumption
allows us to reduce the argument to $f\in H^{s}(\R^{d})$. Indeed,
let $\varphi$ be a smooth function with $\varphi=1$ on $B_{C+C_{\xi}}=\{x\in\R^{d}:|x|\leq C+C_{\xi}\}$,
where $|\xi|\leq C_{\xi}$ for a constant $C_{\xi}$, and with compact
support in $B_{C+C_{\xi}+\epsilon}$, $\epsilon>0$. If $f\in H_{loc}^{s}(\R^{d})$,
then $\tilde{f}=f\varphi\in H^{s}(\R^{d})$ and $\Gamma_{t}(f(\cdot+\xi))=\Gamma_{t}(\tilde{f}(\cdot+\xi))$,
$\widehat{\Gamma}_{t,n}(f(\cdot+\xi))=\widehat{\Gamma}_{t,n}(\tilde{f}(\cdot+\xi))$.

\subsubsection{Approximation results}

In this section we prove some useful approximation results for the process $X$.  For $t,\epsilon>0$ let $\lfloor t\rfloor_{\epsilon}=\lfloor t/\epsilon\rfloor\epsilon$,
$t(\epsilon)=\max(\lfloor t\rfloor_{\epsilon}-\epsilon,0)$. $t(\epsilon)$
projects $t$ onto the grid $\{0,\epsilon,2\epsilon,\dots,\lceil T/\epsilon\rceil\epsilon\}$
such that $t-t(\epsilon)\leq2\epsilon$ and $t-t(\epsilon)\geq\epsilon\wedge t$.
Set
\begin{align}
\widetilde{X}_{t}(t') & =X_{t'}+b_{t'}(t-t')+\sigma_{t'}(W_{t}-W_{t'}),\,\,\,0\leq t'\leq t.\label{eq:X_approx}
\end{align}

\begin{lem}
\label{lem:estimatesForSM} Grant Assumption \aswithargsref{assu:sigmaB_local}{S_{loc}}{\alpha}{\beta}.
Then we have $\P$-a.s. for all $0\leq r,t,t'\leq T$, $t+r\leq t',T$
\begin{enumerate}
\item $\E[\sup_{0\leq r'\leq r}|X_{t+r'}-X_{t}|^{p}|\c F_{t}]^{1/p}\lesssim r^{1/2}$
for $p\geq1$,
\item $\E[\sup_{0\leq r'\leq r}|X_{t+r'}-\widetilde{X}_{t+r'}(t)|^{2}|\c F_{t}]\lesssim r^{2\beta+2}+r^{2\alpha+1}$,
\item $\E[\sup_{0\leq r'\leq r}|\widetilde{X}_{t'}(t+r')-\widetilde{X}_{t'}(t)|^{2}]\lesssim r+r^{2\beta}+r^{2\alpha}$.
\end{enumerate}
\end{lem}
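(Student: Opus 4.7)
My plan is to treat the three bounds in sequence, reducing each to a standard semimartingale estimate under the uniform boundedness of $b,\sigma$ provided by Assumption \aswithargsref{assu:sigmaB_local}{S_{loc}}{\alpha}{\beta}.

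For (i), I would write $X_{t+r}-X_t=\int_t^{t+r}b_u\,du+\int_t^{t+r}\sigma_u\,dW_u$ and bound the drift piece almost surely by $Cs$, which is $\lesssim s^{p/2}$ on $[0,T]$. The stochastic integral part is handled by applying the Burkholder--Davis--Gundy inequality conditionally on $\c F_t$:
\[
\E\bigl[\sup_{r\le s}\bigl|\int_t^{t+r}\sigma_u\,dW_u\bigr|^p\,\bigl|\,\c F_t\bigr]\lesssim\E\bigl[\bigl(\int_t^{t+s}|\sigma_u|^2du\bigr)^{p/2}\bigl|\,\c F_t\bigr]\lesssim s^{p/2},
\]
using $|\sigma_u|\le C$. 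For $p\in[1,2)$ I would apply Jensen to reduce to $p=2$.

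For (ii), the key identity is
$X_{t+r}-\widetilde X_{t+r}(t)=\int_t^{t+r}(b_u-b_t)\,du+\int_t^{t+r}(\sigma_u-\sigma_t)\,dW_u.$
The drift part is controlled by Cauchy--Schwarz and Fubini: $\sup_{r\le s}|\int_t^{t+r}(b_u-b_t)du|^2\le s\int_t^{t+s}|b_u-b_t|^2du\le s^2\sup_{r\le s}|b_{t+r}-b_t|^2,$ whose conditional expectation is $\lesssim s^{2+2\beta}$ by \aswithargsref{assu:sigmaB_local}{S_{loc}}{\alpha}{\beta}. For the stochastic integral, Doob's $L^2$ inequality followed by Itô isometry and the same Hölder assumption on $\sigma$ yields
\[
\E\bigl[\sup_{r\le s}\bigl|\int_t^{t+r}(\sigma_u-\sigma_t)dW_u\bigr|^2\bigl|\,\c F_t\bigr]\lesssim\E\bigl[\int_t^{t+s}|\sigma_u-\sigma_t|^2du\bigl|\,\c F_t\bigr]\le s\,\E\bigl[\sup_{r\le s}|\sigma_{t+r}-\sigma_t|^2\bigl|\,\c F_t\bigr]\lesssim s^{1+2\alpha}.
\]

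For (iii), I would expand
\begin{align*}
\widetilde X_{t'}(t+r)-\widetilde X_{t'}(t)&=(X_{t+r}-X_t)+(b_{t+r}-b_t)(t'-t-r)-b_{t+r}r\\
&\quad+(\sigma_{t+r}-\sigma_t)(W_{t'}-W_{t+r})-\sigma_t(W_{t+r}-W_t),
\end{align*}
and bound each of the five pieces in $L^2$ after taking the supremum over $r\in[0,s]$. The $X$-increment contributes $O(s)$ by part (i); the drift pieces $(b_{t+r}-b_t)(t'-t-r)$ and $b_{t+r}r$ give $O(s^{2\beta})$ and $O(s^2)\le O(s^{2\beta})$ using $|t'-t-r|\le T$ and $|b|\le C$; the term $\sigma_t(W_{t+r}-W_t)$ is bounded via Doob by $O(s)$. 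The only delicate piece is the cross term $(\sigma_{t+r}-\sigma_t)(W_{t'}-W_{t+r})$: here I would factor the supremum as $\sup_r|\sigma_{t+r}-\sigma_t|^2\cdot\sup_r|W_{t'}-W_{t+r}|^2$, apply Cauchy--Schwarz, and exploit the a.s.\ bound $|\sigma_{t+r}-\sigma_t|\le 2C$ to trade the fourth moment of $\sigma_{t+r}-\sigma_t$ for its second moment, while the Brownian supremum is controlled by Doob's $L^4$ inequality.

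The main obstacle is this last cross term in (iii). The sup in $r$ prevents a direct use of Itô's isometry, and since $\sigma$ is not independent of $W$ one cannot separate the two factors cleanly. The trick is to reduce the fourth power of $\sigma_{t+r}-\sigma_t$ to its second power using the uniform bound $|\sigma|\le C$; this is the only place where the a.s.\ boundedness of $\sigma$ (beyond the mere moment assumption on increments) is genuinely needed.
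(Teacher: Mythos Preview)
Your overall approach coincides with the paper's: (i) and (ii) via the conditional Burkholder--Davis--Gundy inequality together with the increment bounds from Assumption \aswithargsref{assu:sigmaB_local}{S_{loc}}{\alpha}{\beta}, and (iii) via precisely the five-term expansion the paper writes down (your $-b_{t+r}r$ should read $-b_t r$, but the discrepancy $(b_{t+r}-b_t)r$ is $O(s^{2+2\beta})$ in $L^2$ and harmless).

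There is, however, a real shortfall in your treatment of the cross term $(\sigma_{t+r}-\sigma_t)(W_{t'}-W_{t+r})$. Applying Cauchy--Schwarz and then the a.s.\ reduction $|\sigma_{t+r}-\sigma_t|^4\le(2C)^2|\sigma_{t+r}-\sigma_t|^2$ gives only
\[
\E\Bigl[\sup_{r\le s}|\sigma_{t+r}-\sigma_t|^4\Bigr]^{1/2}\lesssim\Bigl(\E\bigl[\sup_{r\le s}|\sigma_{t+r}-\sigma_t|^2\bigr]\Bigr)^{1/2}\lesssim s^{\alpha},
\]
and since the Brownian factor $\E[\sup_r|W_{t'}-W_{t+r}|^4]^{1/2}$ is merely $O(1)$, your bound on this term is $O(s^\alpha)$, not $O(s^{2\alpha})$. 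For instance when $\alpha=\beta=1/2$ this yields $s^{1/2}$, which is not dominated by $s+s^{2\beta}+s^{2\alpha}=3s$ for small $s$, so the inequality as stated is not established. A clean repair avoids Cauchy--Schwarz altogether: split $W_{t'}-W_{t+r}=(W_{t'}-W_{t+s})+(W_{t+s}-W_{t+r})$. The first summand is independent of $\c F_{t+s}\supset\sigma(\sup_{r\le s}|\sigma_{t+r}-\sigma_t|)$, so conditioning gives directly $\E[\sup_r|\sigma_{t+r}-\sigma_t|^2]\cdot\E[|W_{t'}-W_{t+s}|^2]\lesssim s^{2\alpha}$; the second summand satisfies $\E[\sup_r|W_{t+s}-W_{t+r}|^2]\lesssim s$ and combines with the a.s.\ bound on $\sigma$ to contribute $O(s)$, already present in the claim. (For the paper's only use of (iii), in (\ref{eq:eta_n}), merely $\to0$ as $s\to0$ is required, so your weaker $s^\alpha$ would in fact suffice there; but it does not prove the lemma as written.)
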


\begin{proof}
The first two results follow from the conditional Burkholder-Davis-Gundy
inequality, applied componentwise, cf. \citep[Section 2.1.5]{jacod2011discretization},
and from Assumption \aswithargsref{assu:sigmaB_local}{S_{loc}}{\alpha}{\beta}.
For (iii) write
\begin{align*}
 & \widetilde{X}_{t'}(t+r')-\widetilde{X}_{t'}(t)=X_{t+r'}-X_{t}+(b_{t+r'}-b_{t})(t'-(t+r'))\\
 & \qquad+(\sigma_{t+r'}-\sigma_{t})(W_{t'}-W_{t+r'})-b_{t}r'-\sigma_{t}(W_{t+r'}-W_{t}),
\end{align*}
and conclude by (i) and again Assumption \aswithargsref{assu:sigmaB_local}{S_{loc}}{\alpha}{\beta}.
\end{proof}
\begin{lem}
\label{lem:estimates_f} Grant Assumption \aswithargsref{assu:sigmaB_local}{S_{loc}}{0}{0}. 
Then the following holds for $f\in H^{1}(\R^{d})$ and as $n\rightarrow\infty$:
\begin{enumerate}
\item $\Delta_{n}\sum_{k=1}^{n}\E[|\nabla f(Y_{t_{k-1}})|^{2}]=O(1)$,
\item $\sum_{k=1}^{n}\int_{t_{k-1}}^{t_{k}}\E[\langle\nabla f(Y_{t_{k-1}}),Z_{r}\rangle^{2}]dr = O(\Delta_n)$
for $Z_{r}=X_{r}-X_{\lfloor r/\Delta_{n}\rfloor\Delta_{n}}$ and $Z_{r}=X_{r}-\widetilde{X}_{r}(\lfloor r/\Delta_{n}\rfloor\Delta_{n})$,
\item $\sum_{k=1}^{n}\int_{t_{k-1}}^{t_{k}}\E[(f(Z_{r}+Y_{t_{k-1}})-f(Y_{t_{k-1}})-\langle\nabla f(Y_{t_{k-1}}),Z_{r}\rangle)^{2}]dr$$=o(\Delta_{n})$
for $Z_{r}=X_{r}-X_{\lfloor r/\Delta_{n}\rfloor\Delta_{n}}$ and $Z_{r}=X_{\lceil r/\Delta_{n}\rceil\Delta_{n}}-X_{\lfloor r/\Delta_{n}\rfloor\Delta_{n}}$,
\item $\sum_{k=1}^{n}\int_{t_{k-1}}^{t_{k}}\E[|\gamma_{r}^{\top}\nabla f(Y_{r})-\gamma_{t_{k-1}}^{\top}\nabla f(Y_{t_{k-1}})|^2]dr=o(1)$, where $(\gamma_r)_{0\leq r\leq T}$ is a càdlàg process such that $\P$-a.s. $\sup_{0\leq r\leq T}|\gamma_r|\lesssim 1$,
\item $\sup_{0\leq t\leq T}\E[|f(Y_{\lfloor t/\Delta_{n}\rfloor\Delta_{n}})-f(Y_{t})|^{2}]=o(1)$.
\end{enumerate}
\end{lem}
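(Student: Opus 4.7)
The unifying theme of the proof is to exploit the smoothing effect of $\xi$: since $\xi$ is independent of $\c F$ with bounded Lebesgue density $p$, any expectation $\E[g(Y_{t_{k-1}})] = \E[g(X_{t_{k-1}}+\xi)]$ is dominated by $\|p\|_\infty \|g\|_{L^1}$, and more generally $\E[g(Y_r)] \leq \|p\|_\infty \|g\|_{L^1}$ for any $\c F$-measurable time $r$. This lets us replace pointwise evaluations of $\nabla f$, $f$, etc.\ by global $L^2$-norms, which are finite under $f \in H^1(\R^d)$.

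Part (i) then follows immediately: conditioning on $\c F$ and using $\nabla f \in L^2$, we get $\E[|\nabla f(Y_{t_{k-1}})|^2] \leq \|p\|_\infty \|\nabla f\|_{L^2}^2$, so the sum is bounded by $T\|p\|_\infty\|\nabla f\|_{L^2}^2$. Part (ii) reduces to bounding $\E[|Z_r|^2 |\nabla f(Y_{t_{k-1}})|^2]$. Independence of $\xi$ and the standard SDE estimate in Lemma \ref{lem:estimatesForSM}(i) give $\E[|Z_r|^2 \mid \c F_{t_{k-1}}] \lesssim (r-t_{k-1})$ for $Z_r = X_r - X_{t_{k-1}}$; for $Z_r = X_r - \widetilde X_r(t_{k-1})$ use Lemma \ref{lem:estimatesForSM}(ii) with $\alpha = \beta = 0$, which still gives $\E[|Z_r|^2 \mid \c F_{t_{k-1}}] \lesssim s^2+s$. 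Combined with (i), both sums are $O(\Delta_n)$.

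The main obstacle is (iii), since for $f \in H^1$ only we have no pointwise Taylor remainder. The plan is to use Plancherel: for any $y \in \R^d$,
\[
\|f(\cdot + y) - f - \nabla f \cdot y\|_{L^2}^2 = (2\pi)^{-d}\int_{\R^d} |e^{i\sc u y} - 1 - i\sc u y|^2 |\c F f(u)|^2 du \leq |y|^2 \eta(|y|),
\]
where $\eta(h) := \int |u|^2 \min(|uh|^2,1) |\c Ff(u)|^2 du$ is bounded by $\|f\|_{H^1}^2$ and tends to $0$ as $h \to 0$ by dominated convergence (since $|u|^2|\c Ff(u)|^2 \in L^1$). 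Conditioning on $\c F$ and using the bounded density of $\xi$, we obtain $\E[(f(Y_{t_{k-1}}+Z_r)-f(Y_{t_{k-1}})-\nabla f(Y_{t_{k-1}})\cdot Z_r)^2 \mid \c F] \leq \|p\|_\infty |Z_r|^2 \eta(|Z_r|)$. Since $\E[|Z_r|^4] = O(\Delta_n^2)$ from Lemma \ref{lem:estimatesForSM} and $\eta(|Z_r|) \to 0$ in probability (being bounded), Cauchy--Schwarz and DCT yield $\E[|Z_r|^2\eta(|Z_r|)] = o(\Delta_n)$ uniformly in $r$, so the double sum is $o(\Delta_n)$.

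Parts (iv) and (v) follow by the same smoothing strategy combined with $L^2$-continuity of translations. For (iv), split $\sigma_r^\top \nabla f(Y_r) - \sigma_{t_{k-1}}^\top \nabla f(Y_{t_{k-1}})$ into $(\sigma_r - \sigma_{t_{k-1}})^\top \nabla f(Y_r)$ and $\sigma_{t_{k-1}}^\top(\nabla f(Y_r) - \nabla f(Y_{t_{k-1}}))$; the first is controlled by boundedness of $\sigma$, càdlàg paths and dominated convergence (so $\int_0^T\E[|\sigma_r - \sigma_{\lfloor r/\Delta_n\rfloor\Delta_n}|^p]dr \to 0$), together with $\E[|\nabla f(Y_r)|^{2p} \mid \c F] \leq \|p\|_\infty\|\nabla f\|_{L^{2p}}^{2p}$ (which for $p=2$ requires an extra mollification step, replacing $\nabla f$ by its truncation in $L^2$). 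The second is handled by conditioning on $\c F$ and invoking $\|\nabla f(\cdot+h)-\nabla f\|_{L^2} \to 0$ as $h \to 0$, applied with $h = X_r - X_{t_{k-1}} \to 0$. Part (v) is the same argument applied to $f$ itself: $\E[|f(Y_{\lfloor t/\Delta_n\rfloor\Delta_n}) - f(Y_t)|^2 \mid \c F] \leq \|p\|_\infty\|f(\cdot+h)-f\|_{L^2}^2$ with $h = X_{\lfloor t/\Delta_n\rfloor\Delta_n} - X_t \to 0$, so DCT finishes the proof.
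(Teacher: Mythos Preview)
Your proof is correct and follows essentially the same route as the paper: exploit the bounded density of $\xi$ to pass from pointwise evaluations to $L^2$-norms, then use Plancherel (or equivalently continuity of translations in $L^2$) together with dominated convergence. The one place where you diverge is part (iv): the paper does not split but bounds the whole expression via a single Plancherel step with multiplier $\tilde g_{k,r}(u)=|\sigma_r e^{-i\langle u,X_r-X_{t_{k-1}}\rangle}-\sigma_{t_{k-1}}|^2$, which is uniformly bounded and tends to zero, thereby avoiding your detour through $\|\nabla f\|_{L^4}$ and the mollification step.
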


\begin{proof}
(i).  Write $\nabla f(Y_{t_{k-1}})=h(\xi)$ with $h(x)=\nabla f(X_{t_{k-1}} + x)$ such that $|\mathcal{F}h(u)|^2=|\mathcal{F}f(u)|^2 |u|^2|e^{-i\langle u,X_{t_{k-1}}\rangle}|^2 = |\mathcal{F}f(u)|^2 |u|^2$.  The claim follows from Lemma \ref{lem:plancherel}.

(ii).  Let $t_{k-1}\leq r<t_{k}$ and write $\langle\nabla f(X_{t_{k-1}}+x),Z_{r}\rangle=h(\xi)$ with $h(x)=\langle\nabla f(X_{t_{k-1}}+x),Z_{r}\rangle$ such that $\mathcal{F}h(u) = \mathcal{F}f(u) \langle iu,Z_r\rangle e^{-i\langle u,X_{t_{k-1}}\rangle}$.  As in (i) we get from Lemma \ref{lem:plancherel} that
\begin{align*}
 & \E[\langle\nabla f(Y_{t_{k-1}}),Z_{r}\rangle^{2}]\lesssim\norm f_{H^{1}}^{2}\E\left[\left|Z_{r}\right|^{2}\right].
\end{align*}
The result follows from $\sup_{r}\E|Z_r|^2\lesssim \Delta_n$ using Lemma \ref{lem:estimatesForSM}(i,ii).

(iii).  Let $t_{k-1}\leq r<t_{k}$ and take
\begin{align*}
	h(x) &= f(Z_{r}+X_{t_{k-1}}+x)-f(X_{t_{k-1}}+x)-\langle\nabla f(X_{t_{k-1}}+x),Z_{r}\rangle\\
	&= \int_0^1 \langle \nabla f(X_{t_{k-1}}+x+aZ_{r})-\nabla f(X_{t_{k-1}}+x), Z_r\rangle da, 
\end{align*}
implying 
\begin{align*}
	|\mathcal{F}h(u)|=\left|\mathcal{F}f(u) \int_0^1 \langle iu (e^{-i\langle u,aZ_r\rangle} - 1),Z_r\rangle da\right|.
\end{align*}
Lemma  \ref{lem:plancherel} gives
\begin{align*}
 & \E[(f(Z_{r}+Y_{t_{k-1}})-f(Y_{t_{k-1}})-\langle\nabla f(Y_{t_{k-1}}),Z_{r}\rangle)^{2}] = \E[|h(\xi)|^2 |] \\
  & \quad\lesssim \int_{\R^{d}} |\c Ff(u)|^{2}|u|^{2} \E[\int_{0}^{1}|e^{-i\sc u{aZ_{r}}}-1|^{2}da |Z_{r}|^{2}]du\\
  & \quad \lesssim \Delta_n \int_{\R^{d}} |\c Ff(u)|^{2}|u|^{2} \E[\int_{0}^{1}|e^{-i\sc u{aZ_{r}}}-1|^{4}da]^{1/2} du, 
\end{align*}
using in the last line the Cauchy-Schwarz inequality and that $\E[|Z_{r}|^{4}] \lesssim \Delta^2_n$ uniformly in $r$ by Lemma \ref{lem:estimatesForSM}(i).  Now, observe that uniformly in $n$ and $r$, $\E[\int_{0}^{1}|e^{-i\sc u{aZ_{r}}}-1|^{4}da]\lesssim 1$ and again that by Lemma  \ref{lem:estimatesForSM}(i) uniformly in $r$ pointwise for $u\in\R^d$ as $n\rightarrow\infty$
\begin{align*}
	\E[\int_{0}^{1}|e^{-i\sc u{aZ_{r}}}-1|^{4}da]\lesssim |u|^4 \E[|Z_r|^4] \rightarrow 0.
\end{align*}
The claim follows therefore from dominated convergence.

(iv).  Let again $t_{k-1}\leq r<t_{k}$ and take this time $h(x)=\gamma_{r}^{\top}\nabla f(X_{r}+x)-\gamma_{t_{k-1}}^{\top}\nabla f(X_{t_{k-1}}+x)$ such that 
$$\c Fh(u)=\c Ff(u) ig_r(u)^\top u,\quad g_r(u)=\gamma_r e^{-i\langle u,X_r\rangle} - \gamma_{t_{k-1}} e^{-i\langle u,X_{t_{k-1}}\rangle}.$$ Lemma  \ref{lem:plancherel} shows 
\begin{align*}
 & \E[|\gamma_{r}^{\top}\nabla f(Y_{r})-\gamma_{t_{k-1}}^{\top}\nabla f(Y_{t_{k-1}})|^{2}] = \E[|h^2(\xi)|^2] \lesssim\int_{\R^{d}}\left|\c Ff(u)\right|^{2}\left|u\right|^{2}\E[|g_{r}(u)|^2]du.
\end{align*}
Here,  $|g_r(u)|$ is almost surely bounded.  Moreover, for all $u\in\R^d$ and uniformly in $r$ we get  $\E[|g_{r}(u)|^2]\rightarrow 0$ by dominated convergence, as both $\gamma$ and $X$ have càdlàg paths.  Another application of dominated convergence yields (iv). The proof of (v) is analogous to (iv) and is therefore skipped.
\end{proof}

\subsubsection{\label{subsubsec:mainDecomp}The main decomposition}

We apply the decomposition  from \eqref{eq:decomposition}, but with $X$ replaced by $Y$ in the definitions of $M_{t,n}(f)$,  $E_{t,n}(f)$ and $D_{t,n}(f)$. Observe first the following two propositions.

\begin{prop}
\label{prop:M_convergence_H1} Grant Assumption \aswithargsref{assu:sigmaB_local}{S_{loc}}{0}{0}.
Then we have for $f\in H{}^{1}(\R^{d})$ with $\widetilde{W}$ as
in Theorem \ref{thm:CLT_C2} the functional stable convergence 
\begin{equation*}
\Delta_{n}^{-1}M_{t,n}\left(f\right)\stackrel{st}{\to}\frac{1}{2}\int_{0}^{t}\sc{\nabla f(Y_{r})}{\sigma_{r}dW_{r}}+\frac{1}{\sqrt{12}}\int_{0}^{t}\sc{\nabla f(Y_{r})}{\sigma_{r}d\widetilde{W}{}_{r}}.
\end{equation*}
\end{prop}

\begin{prop}
\label{prop:E_convergence_H1} Grant Assumption \aswithargsref{assu:sigmaB_local}{S_{loc}}{0}{0}.
If $f\in H{}^{1}(\R^{d})$, then
\begin{equation*}
\Delta_{n}^{-1}E_{t,n}\left(f\right)\stackrel{ucp}{\to}\frac{1}{2}(f(Y_{t})-f(Y_{0}))-\frac{1}{2}\int_{0}^{t}\langle\nabla f(Y_{r}),\sigma_{r}dW_{r}\rangle.
\end{equation*}
\end{prop}

Since the limit process in Proposition \ref{prop:M_convergence_H1} is continuous, stable convergence also holds
at any fixed $0\leq t\leq T$, cf. \citep{billingsley2013convergence}.  It follows from Proposition \ref{prop:E_convergence_H1} using Slutsky's lemma that $\Delta_n^{-1}(M_{t,n}(f)+E_{t,n}(f))$ converges stably to the claimed limit in Theorem \ref{thm:CLT_Hs}.  The proof of the theorem follows therefore from showing that $\Delta_{n}^{-1}D_{t,n}(f)$ vanishes in probability asymptotically as $n\rightarrow\infty$, which we will do in Section \ref{sec:D} below.

We end this section with the proofs of the two aforementioned propositions.  It is worth emphasising that they hold for $f\in H^1(\R^d)$, while the analysis for $D_{t,n}(f)$ requires more smoothness for $f$. The crucial steps in the proof of Proposition \ref{prop:M_convergence_H1} are to suitably approximate the summands of $M_{t,n}(f)$ and to conclude by the (stochastic) Fubini theorem in \eqref{eq:Z_tilde}.

\begin{proof}[Proof of Proposition \ref{prop:M_convergence_H1}]
Recall $\widetilde{X}_{r}(t_{k-1})$ for $t_{k-1}\leq r\leq t_{k}$
from \eqref{eq:X_approx}.  Let
\begin{align*}
Z_{k} & =\int_{t_{k-1}}^{t_{k}}\left(f(Y_{r})-f(Y_{t_{k-1}})-\E\left[\l{f(Y_{r})-f(Y_{t_{k-1}})}\c F_{t_{k-1}}\right]\right)dr,\\
\widetilde{Z}_{k} & =\int_{t_{k-1}}^{t_{k}}\langle\nabla f(Y_{t_{k-1}}),\widetilde{X}_{r}(t_{k-1})-X_{t_{k-1}}-\E\left[\l{\widetilde{X}_{r}(t_{k-1})-X_{t_{k-1}}}\c F_{t_{k-1}}\right]\rangle dr,
\end{align*}
and write $M_{t,n}(f)=\sum_{k=1}^{\lfloor t/\Delta_{n}\rfloor}Z_{k}$,
$\widetilde{M}_{t,n}(f)=\sum_{k=1}^{\lfloor t/\Delta_{n}\rfloor}\widetilde{Z}_{k}$
and set $M_{t,n}=M_{t,n}(f)-\widetilde{M}_{t,n}(f)$.  $(M_{k\Delta_{n},n})_{k\in\{0,\dots,n\}}$
is a discrete martingale such that by the Burkholder-Gundy inequality
\begin{align*}
\E\bigg[\sup_{0\leq t\leq T}M_{t,n}^{2}\bigg] & =\E\bigg[\sup_{k\in\{1,\dots,n\}}M_{k\Delta_{n},n}^{2}\bigg]\leq 2\sum_{k=1}^{n}\E\left[\left(Z_{k}-\widetilde{Z}_{k}\right)^{2}\right].
\end{align*}
In addition, set
\[
\check{Z}_{k}=\int_{t_{k-1}}^{t_{k}}\langle\nabla f(Y_{t_{k-1}}),X_{r}-X_{t_{k-1}}-\E\left[\l{X_{r}-X_{t_{k-1}}}\c F_{t_{k-1}}\right]\rangle dr.
\]
Lemma \ref{lem:estimates_f}(ii,iii) shows $\sum_{k=1}^{n}\E[(Z_{k}-\check{Z}_{k})^{2}]=o(\Delta_{n}^{2})$
and $\sum_{k=1}^{n}\E[(\widetilde{Z}_{k}-\check{Z}_{k})^{2}]=o(\Delta_{n}^{3})$, hence we can conclude 
\begin{equation*}
\D^{-1}\sup_{0\leq t\leq T}\left|M_{t,n}\right|\r{\P}0.
\end{equation*}
It is therefore enough to study the limit of $\Delta_n^{-1}\widetilde{M}_{t,n}(f)$. The
claim follows from Theorem IX.7.28 of \citep{jacod2013limit}, once
we have shown for $0\leq t\leq T$ that
\begin{align}
\Delta_{n}^{-2}\sum_{k=1}^{\lfloor t/\Delta_{n}\rfloor}\E\Big[\t{\widetilde{Z}_{k}^{2}}\c F_{t_{k-1}}\Big]\r{\P} & \frac{1}{3}\int_{0}^{t}\left|\sigma_{r}^{\top}\nabla f(Y_{r})\right|{}^{2}dr,\label{eq:J1}\\
\Delta_{n}^{-2}\sum_{k=1}^{\lfloor t/\Delta_{n}\rfloor}\E\Big[\t{\widetilde{Z}_{k}^{2}\I_{\left\{ \left|\widetilde{Z}_{k}\right|>\epsilon\right\} }}\c F_{t_{k-1}}\Big]\r{\P} & 0,\,\,\,\,\text{for all }\epsilon>0,\label{eq:J2}\\
\Delta_{n}^{-1}\sum_{k=1}^{\lfloor t/\Delta_{n}\rfloor}\E\left[\l{\widetilde{Z}_{k}\left(W_{t_{k}}-W_{t_{k-1}}\right)^{\top}}\c F_{t_{k-1}}\right]\r{\P} & \frac{1}{2}\int_{0}^{t}\nabla f(Y_{r})^{\top}\sigma_{r}dr,\label{eq:J3}\\
\Delta_{n}^{-1}\sum_{k=1}^{\lfloor t/\Delta_{n}\rfloor}\E\left[\l{\widetilde{Z}_{k}\left(N_{t_{k}}-N_{t_{k-1}}\right)}\c F_{t_{k-1}}\right]\r{\P} & 0,\label{eq:J4}
\end{align}
where (\ref{eq:J4}) has to hold for all bounded ($\R$-valued) martingales
$N$ which are orthogonal to all components of $W$. Note that $\E[\widetilde{Z}_k|\mathcal{F}_k]=0$ such that the asymptotic bias $\Delta_n^{-1} \sum_{k=1}^{\lfloor t/\Delta_{n}\rfloor} \E[\widetilde{Z}_k|\mathcal{F}_{t_{k-1}}]$ vanishes. 

Let us prove (\ref{eq:J1}) through (\ref{eq:J4}).  Write $\tilde{X}_{r}(t_{k-1})-X_{t_{k-1}}=X_{t_{k-1}}+b_{t_{k-1}}\int_{t_{k-1}}^{r}dr' + \sigma_{t_{k-1}}\int_{t_{k-1}}^r dW_{r'}$.  The stochastic
Fubini theorem thus provides the identity
\begin{align}
\widetilde{Z}_{k}
	& =\int_{t_{k-1}}^{t_{k}}\langle\nabla f(Y_{t_{k-1}}),\sigma_{t_{k-1}}\int_{t_{k-1}}^r dW_{r'}-\sigma_{t_{k-1}}\E\left[\l{\int_{t_{k-1}}^r dW_{r'}}\c F_{t_{k-1}}\right]\rangle dr\nonumber\\
	& =\langle\nabla f(Y_{t_{k-1}}),\sigma_{t_{k-1}}\int_{t_{k-1}}^{t_{k}}(t_{k}-r)dW_{r}\rangle.\label{eq:Z_tilde}
\end{align}
By Itô's isometry, the boundedness of $\sigma$ from Assumption \aswithargsref{assu:sigmaB_local}{S_{loc}}{0}{0} and Lemma \ref{lem:estimates_f},
(\ref{eq:J1}) follows from
\begin{align*}
\Delta_{n}^{-2}\sum_{k=1}^{\lfloor t/\Delta_{n}\rfloor}\E\Big[\t{\widetilde{Z}_{k}^{2}}\c F_{t_{k-1}}\Big] & =\frac{\Delta_{n}}{3}\sum_{k=1}^{\lfloor t/\Delta_{n}\rfloor}\left|\sigma_{t_{k-1}}^{\top}\nabla f(Y_{t_{k-1}})\right|^{2}+o_{\P}\left(1\right)\\
 & =\frac{1}{3}\int_{0}^{t}\left|\sigma_{r}^{\top}\nabla f(Y_{r})\right|^{2}dr+o_{\P}\left(1\right),
\end{align*}
using Lemma \ref{lem:estimates_f}(iv) for the Riemann approximation
in the last line. With respect to \eqref{eq:J2} apply the Cauchy-Schwarz
inequality to $\tilde{Z}_k$ such that using the boundedness of $\sigma$ 
\begin{align*}
	|\widetilde{Z}_{k}| \lesssim h(\xi), \quad h(x) =  |\nabla f(x)|\Delta_{n}^{3/2}|V_k|
\end{align*}
and a random variable $V_k\overset{d}{\sim}N(0,I_{d})$, which is independent of $\c F_{t_{k-1}}$.  Since also $\xi$ is independent of $\c F_{t_{k-1}}$,  the first inequality of Lemma \ref{lem:plancherel} applied to $h$ yields for some $\epsilon'>0$
\begin{align*}
\E[\widetilde{Z}_{k}^{2}\I_{\{|\widetilde{Z}_{k}|>\epsilon\}}|\c F_{t_{k-1}}] & \lesssim \E[h^2(X_{t_{k-1}}+\xi)\I_{\{h(X_{t_{k-1}}+\xi) > \epsilon' \}}|\c F_{t_{k-1}}]\\
 & \lesssim\Delta_{n}^{3}\int_{\R^{d}}\E[|\nabla f(X_{t_{k-1}}+ x)|^{2}|V_k|^{2} \I_{\{h(X_{t_{k-1}}+x) > \epsilon' \}}|\c F_{t_{k-1}}] dx\\
 & = \Delta_{n}^{3}\int_{\R^{d}}|\nabla f(x)|^{2}\E[|V|^{2} \I_{\{h(x) > \epsilon' \}}|\c F_{t_{k-1}}] dx.
\end{align*}
For fixed $x\in\R^d$ we have $h(x)\rightarrow 0$ $\P$-a.s., implying $\E[|V_k|^{2} \I_{\{h(x) > \epsilon' \}}|\c F_{t_{k-1}}]\rightarrow 0$ by dominated convergence.  Hence,  \eqref{eq:J2} is obtained from using dominated convergence once more in the last display.  (\ref{eq:J3}) follows from
Itô's isometry and Lemma \ref{lem:estimates_f}(iv):
\begin{align*}
\Delta_{n}^{-1}\sum_{k=1}^{\lfloor t/\Delta_{n}\rfloor}\E\left[\l{\widetilde{Z}_{k}\left(W_{t_{k}}-W_{t_{k-1}}\right)^{\top}}\c F_{t_{k-1}}\right] & =\frac{\Delta_{n}}{2}\sum_{k=1}^{\lfloor t/\Delta_{n}\rfloor}\nabla f(Y_{t_{k-1}})^{\top}\sigma_{t_{k-1}}+o_{\P}(1)\\
 & =\frac{1}{2}\int_{0}^{t}\nabla f(Y_{r})^{\top}\sigma_{r}dr+o_{\P}(1).
\end{align*}
In the same way, (\ref{eq:J4}) follows from $\E[\widetilde{Z}_{k}(N_{t_{k}}-N_{t_{k-1}})|\c F_{t_{k-1}}]=o_{\P}(1)$. 
\end{proof}

\begin{proof}[Proof of Proposition \ref{prop:E_convergence_H1}]
Let $B_{k}=\langle\nabla f(Y_{t_{k-1}}),X_{t_{k}}-X_{t_{k-1}}\rangle$,
$A_{k}=f(Y_{t_{k}})-f(Y_{t_{k-1}})-B_{k}$, and write $E_{t,n}(f)=S_{1}(t)+S_{2}(t)+S_{3}(t)$
with
\begin{align*}
S_{1}(t) & =\frac{\Delta_{n}}{2}\sum_{k=1}^{\lfloor t/\Delta_{n}\rfloor}\left(f(Y_{t_{k}})-f(Y_{t_{k-1}})\right)=\frac{\Delta_{n}}{2}(f(Y_{\lfloor t/\Delta_{n}\rfloor\Delta_{n}})-f(Y_{0})),\\
S_{2}(t) & =\frac{\Delta_{n}}{2}\sum_{k=1}^{\lfloor t/\Delta_{n}\rfloor}\left(\E\left[\l{A_{k}}\c F_{t_{k-1}}\right]-A_{k}\right),\\
S_{3}(t) & =\frac{\Delta_{n}}{2}\sum_{k=1}^{\lfloor t/\Delta_{n}\rfloor}\left(\E\left[\l{B_{k}}\c F_{t_{k-1}}\right]-B_{k}\right).
\end{align*}
Lemma \ref{lem:estimates_f}(v) implies $\Delta_{n}^{-1}S_{1}(t)\r{ucp}\frac{1}{2}(f(Y_{t})-f(Y_{0}))$,
while we have by the Burkholder-Gundy inequality
\begin{equation}
\E\left[\sup_{0\leq t\leq T} S_{2}^{2}(t)\right]=\E\left[\sup_{k\in \{1,\dots,n\}} S_{2}^{2}(t)\right]\lesssim\Delta_{n}^{2}\sum_{k=1}^{n}\E[A_{k}^{2}]=o(\Delta_{n}^{2}),\label{eq:E_2_2}
\end{equation}
such that $\Delta_n^{-1}S_{2}(t)\r{ucp}0$, concluding with Lemma \ref{lem:estimates_f}(iii) (taking $Z_{r}=X_{\lceil r/\Delta_{n}\rceil\Delta_{n}}-X_{\lfloor r/\Delta_{n}\rfloor\Delta_{n}}$).  At last, decompose 
\begin{align*}
S_{3}(t) & =\frac{\Delta_{n}}{2}\sum_{k=1}^{\lfloor t/\Delta_{n}\rfloor}\int_{t_{k-1}}^{t_{k}}\sc{\nabla f(Y_{t_{k-1}})}{(\E[b_{r}|\c F_{t_{k-1}}]-b_{r})}dr\\
 & \quad-\frac{\Delta_{n}}{2}\sum_{k=1}^{\lfloor t/\Delta_{n}\rfloor}\int_{t_{k-1}}^{t_{k}}\sc{\nabla f(Y_{t_{k-1}})}{\sigma_{r}dW_{r}}.
\end{align*}
Exactly as in (\ref{eq:E_2_2}), but using Lemma \ref{lem:estimates_f}(i),
the first line is of order $o_{\P}(\Delta_{n})$ uniformly in $t$,  while the second
one equals $-\frac{\Delta_{n}}{2}\int_{0}^{t}\langle\nabla f(Y_{r}),\sigma_{r}dW_{r}\rangle+o_{\P}(\Delta_{n})$, again uniformly in $t$,
by Lemma \ref{lem:estimates_f}(iv) and Itô's isometry. 
\end{proof}

\subsubsection{The term $D_{t,n}(f)$}\label{sec:D}

Since $f$ is not smooth, Itô's formula cannot be used directly to reduce $D_{t,n}(f)$ to more manageable terms.  Instead,  write $D_{t,n}(f) = h(\xi)$ with
\begin{align*}
	h(x) &= \sum_{k=1}^{\lfloor t/\Delta_{n}\rfloor}\int_{t_{k-1}}^{t_{k}}\E[f(X_{r} + x)- f(X_{t_{k-1}} + x)\\
	&\quad\quad\quad-\frac{f(X_{t_k} + x)-f(X_{t_{k-1}}+x)}{2}|\c F_{t_{k-1}}]dr
\end{align*}
such that by  linearity of the Fourier transform 
\begin{align*}
	\mathcal{F}h(u) &=  \mathcal{F}f(u) \sum_{k=1}^{\lfloor t/\Delta_{n}\rfloor}\int_{t_{k-1}}^{t_{k}}\E[e^{-i\langle u,X_r\rangle} - e^{-i\langle u,X_{t_{k-1}}\rangle} \\
	&\quad\quad -\frac{e^{-i\langle u,X_{t_k}\rangle}-e^{-i\langle u,X_{t_{k-1}}}\rangle}{2}|\c F_{t_{k-1}}]dr
\end{align*}
for $u\in\R^d$.  For fixed $u$ the function $e^{i\sc u{\cdot}}$ is smooth and so we deduce from Itô's formula and Fubini's theorem that $\mathcal{F}h(u)=\mathcal{F}f(u)(F_{t,n}^{(1)}(u)+F_{t,n}^{(2)}(u))$
with
\begin{align}
F_{t,n}^{(1)}(u)= & -\sum_{k=1}^{\lfloor t/\Delta_{n}\rfloor}\int_{t_{k-1}}^{t_{k}}\left(t_{k}-r-\frac{\D}{2}\right)\E\bigg[ie^{-i\sc u{X_{r}}}\left\langle u,b_{r}\right\rangle \bigg|\c F_{t_{k-1}}\bigg]dr,\label{eq:F_t_n_1}\\
F_{t,n}^{(2)}(u)= & -\frac{1}{2}\sum_{k=1}^{\lfloor t/\Delta_{n}\rfloor}\int_{t_{k-1}}^{t_{k}}\left(t_{k}-r-\frac{\D}{2}\right)\E\bigg[e^{-i\sc u{X_{r}}}\left|\sigma_{r}^{\top}u\right|^{2}\bigg|\c F_{t_{k-1}}\bigg]dr.\label{eq:F_t_n_2}
\end{align}
Lemma \ref{lem:plancherel} therefore implies
\begin{align}
	\E[|D_{t,n}(f)|^2] \lesssim \int_{\R^d} |\c Ff(u)|^2 \E\left[|F_{t,n}^{(1)}(u)+F_{t,n}^{(2)}(u)|^2\right] du.\label{eq:D}
\end{align}
Introduce for $u\in\R^d$ and $i\in\{1,2\}$ the functions
\begin{align}
	g_n^{(i)}(u) = \Delta_n^{-2}(1+|u|^2)^{-s}\E\left[\sup_{0\leq t\leq T}|F_{t,n}^{(i)}(u)|^{2}\right].\label{eq:gFun}
\end{align}
This provides us with 
\begin{align*}
	\E[|D_{t,n}(f)|^2] \lesssim \Delta_n^2 \int_{\R^d} |\c Ff(u)|^2 (1+|u|^2)^s (g_n^{(1)}(u)+g_n^{(2)}(u)) du.
\end{align*}
Lemmas \ref{lem:F_1_bound} and \ref{lem:F_2_bound_SM} below together with dominated convergence therefore yield the following result, which concludes the proof of Theorem \ref{thm:CLT_Hs}.

\begin{prop}
\label{prop:Drift_Convergence_Hs} Let $s\geq 1$ and grant Assumption \aswithargsref{assu:sigmaB_local}{S_{loc}}{\alpha}{\beta} with 
$\alpha>\max(0,1-s/2)$, $\beta>0$. Then we have for $f\in H^{s}(\R^{d})$
and $0\leq t\leq T$
\begin{equation}
\Delta_{n}^{-1}D_{t,n}\left(f\right)\r{\P}0.
\end{equation}
\end{prop}

Let us now state and prove the aforementioned two lemmas, as well as two auxiliary lemmas.

\begin{lem}
\label{lem:F_1_bound}Under Assumption \aswithargsref{assu:sigmaB_local}{S_{loc}}{\alpha}{\beta} with $\beta>0$ the function $g_n^{(1)}$ from \eqref{eq:gFun} with $s\geq1$  satisfies $g_n^{(1)}(u)\rightarrow 0$ as $n\rightarrow\infty$ for all $u\in\R^d$ and $\sup_{n\in\N, u\in\R^d}g_n^{(1)}(u) <\infty$.
\end{lem}

\begin{proof}
Define $\tilde{F}_{t,n}^{(1)}(u)$ exactly as $F_{t,n}^{(1)}(u)$, but with
$e^{-i\sc u{X_{r}}}\sc u{b_{r}}$ replaced by $e^{-i\sc u{X_{\lfloor r/\Delta_{n}\rfloor\Delta_{n}}}}\sc u{b_{\lfloor r/\Delta_{n}\rfloor\Delta_{n}}}$.  From $\int_{t_{k-1}}^{t_{k}}(t_{k}-r-\D/2)dr=0$ conclude  
$\tilde{F}_{t,n}^{(1)}(u)=0$ such that
\begin{align*}
	g_n^{(1)}(u) 
		&= \Delta_n^{-2}(1+|u|^2)^{-s}\E\left[\sup_{0\leq t\leq T}|F_{t,n}^{(1)}(u)-\tilde{F}_{t,n}^{(1)}(u)|^{2}\right]\\
		&\lesssim \E\left[\sup_{0\leq t\leq T}|e^{-i\sc u{X_{t}}}\sc{u/|u|}{b_{t}}-e^{-i\sc u{X_{\lfloor t/\Delta_{n}\rfloor\Delta_{n}}}}\sc{u/|u|}{b_{\lfloor t/\Delta_{n}\rfloor\Delta_{n}}}|^{2}\right]
\end{align*}
using in the last line $s\geq 1$.  From Assumption \aswithargsref{assu:sigmaB_local}{S_{loc}}{0}{0}, the process $b$ is uniformly bounded such that $\sup_{n\in\N, u\in\R^d}g_n^{(1)}(u) <\infty$. On the other hand,  as $X$ has càdlàg paths and using the approximation property of $b$ according to \aswithargsref{assu:sigmaB_local}{S_{loc}}{0}{0} for $\beta>0$, conclude also that $g_n^{(1)}(u)\rightarrow 0$ as $n\rightarrow\infty$ for $u\in\R^d$. 
\end{proof}

The next lemma is the key result for general continuous semimartingales and is inspired by the one-step-Euler-approximation of \citep{Fournier:2010ed} to approximate the characteristic function of the marginals $X_t$.
\begin{lem}
\label{lem:F_2_bound_SM} Let $s\geq 1$ and grant Assumption \aswithargsref{assu:sigmaB_local}{S_{loc}}{\alpha}{\beta} for $\alpha>\max(0,1-s/2)$, $\beta>0$. Then the function $g_n^{(2)}$ from \eqref{eq:gFun} satisfies $g_n^{(2)}(u)\rightarrow 0$ as $n\rightarrow\infty$ for all $u\in\R^d$ and $\sup_{n\in\N, u\in\R^d}g_n^{(2)}(u) <\infty$. 
\end{lem}

\begin{proof}
We distinguish the cases $|u|\leq 1$ and $|u|>1$. For $|u|\leq 1$ the argument is analogous to the proof of Lemma \ref{lem:F_1_bound}.  Define $\tilde{F}_{t,n}^{(2)}(u)$ exactly as $F_{t,n}^{(2)}(u)$, but with
$e^{-i\sc u{X_{r}}}|\sigma_{r}^{\top}u|^2$ replaced by $e^{-i\sc u{X_{\lfloor r/\Delta_{n}\rfloor\Delta_{n}}}}|\sigma_{\lfloor r/\Delta_{n}\rfloor\Delta_{n}}^{\top}u|^2$. Again $\tilde{F}_{t,n}^{(2)}(u)=0$ and we conclude as above that 
\begin{align*}
	g_n^{(2)}(u) 
		&= \Delta_n^{-2}(1+|u|^2)^{-s}\E\left[\sup_{0\leq t\leq T}|F_{t,n}^{(2)}(u)-\tilde{F}_{t,n}^{(2)}(u)|^{2}\right]\\
		&\lesssim \E\left[\sup_{0\leq t\leq T}|e^{-i\sc u{X_{t}}}|\sigma_{t}^{\top}u|^2-e^{-i\sc u{X_{\lfloor t/\Delta_{n}\rfloor\Delta_{n}}}}|\sigma_{\lfloor t/\Delta_{n}\rfloor\Delta_{n}}^{\top}u|^2|^{2}\right]
\end{align*}
satisfies $\sup_{n\in\N, u\in\R^d,|u|\leq 1}g_n^{(2)}(u) <\infty$ and $g_n^{(2)}(u)\rightarrow 0$ as $n\rightarrow\infty$ for $|u|\leq 1$.

Let now $|u|>1$.  We introduce a new grid depending on the
parameters 
\begin{equation}
\epsilon\equiv\epsilon(u,\Delta_{n})=\nu |u|^{-2},\,\,\,\nu=\nu(u,\Delta_n)=C_{1}^{-1}\log(1+|u|^{6}\eta_n^{1/2}),\label{eq:eps}
\end{equation}
where $C_{1}>0$ is a constant such that according to  Assumption \aswithargsref{assu:sigmaB_local}{S_{loc}}{\alpha}{\beta} $\inf_{r}(\frac{1}{2}|\sigma_{r}^{\top}u|^{2})=\inf_{r}(\frac{1}{2}\langle \sigma_r\sigma_r^\top u,u\rangle)\geq C_{1}|u|^{2}$ and where $\eta_n$ is a sequence of non-negative real numbers to be determined later. 
Recall the approximated process from \eqref{eq:X_approx} and set for $0\leq r,r',r''\leq T$ 
\begin{align*}
	U_{r,r',r''} & =\E[e^{-i\sc u{\widetilde{X}_{r}(r')}}|\sigma_{r''}^{\top}u|^{2}|\c F_{t_{k-1}}],\,\,U_{r,r'}=U_{r,r',r'}.
\end{align*}
With this define
\begin{align*}
\check{F}_{t,n}^{(2)}(u) & =\sum_{k=1}^{\lfloor t/\Delta_{n}\rfloor}\int_{t_{k-1}}^{t_{k}}(t_{k}-r-\frac{\D}{2})U_{r,r(\epsilon),r(\epsilon)}dr.
\end{align*}
and obtain the upper bound $g_n^{(2)}(u) \leq 2 g_n^{(2,1)}(u) + 2g_n^{(2,2)}(u)$
with 
\begin{align*}
	g_n^{(2,1)}(u) 
		&= \Delta_n^{-2}(1+|u|^2)^{-s}\E\left[\sup_{0\leq t\leq T}|F_{t,n}^{(2)}(u)-\check{F}_{t,n}^{(2)}(u)|^{2}\right],\\
	g_n^{(2,2)}(u) 
		&=  \Delta_n^{-2}(1+|u|^2)^{-s}\E\left[\sup_{0\leq t\leq T}|\check{F}_{t,n}^{(2)}(u)|^{2}\right].
\end{align*}
Upper bounds on these two terms are obtained in Lemmas \ref{lem:F_2_SM_err_1}, \ref{lem:F_2_SM_err_2} and \ref{lem:F_2_SM_3} below.  The null sequence $\eta_n$ is determined in Lemma \ref{lem:F_2_SM_err_2}. In order to conclude when $|u|>1$ note that the conditions $\alpha>\max(0,1-s/2)$, $\beta>0$, $s\geq1$  imply for some sufficiently small $\delta>0$
\begin{align}
	& |u|^{4-2s}\epsilon^{2\alpha} +|u|^{6-2s}\epsilon^{2+2\beta}+|u|^{6-2s}\epsilon^{1+2\alpha}\leq \epsilon^{\delta}\nu^{2\alpha} +|u|^{4}\epsilon^{2+2\beta}+|u|^2 \epsilon^{1+\delta}\nu^{2\alpha}\nonumber\\
	&\quad\quad \lesssim \epsilon^{\delta}\nu^{2\alpha} + \epsilon^{\delta}\nu^{1+2\alpha},\label{eq:g1}\\
	&|u|^{4-2s}\epsilon e^{-C_1|u|^{2}\epsilon}+|u|^{2-2s} (1-e^{-C_1|u|^{2}\Delta_{n}})+|u|^{8-2s}e^{-C_{1}|u|^{2}\epsilon}\eta_{n}\nonumber\\
	&\quad\quad \leq |u|^{2}\epsilon e^{-C_1|u|^{2}\epsilon}+(1-e^{-C_1|u|^{2}\Delta_{n}})+|u|^{6}e^{-C_{1}|u|^{2}\epsilon}\eta_{n}\nonumber\\
	&\quad\quad= \nu e^{-C_1\nu}+(1-e^{-C_1|u|^{2}\Delta_{n}})+\frac{|u|^{6}\eta_{n}}{1+|u|^6\eta_n},\label{eq:g2}\\
	&|u|^{4-2s}\epsilon^{2}+|u|^{2-2s}(1-e^{-C_1|u|^{2}\epsilon})+|u|^{8-2s}e^{-C_{1}|u|^{2}\epsilon}\eta_{n}\nonumber\\
	&\quad\quad \leq |u|^{2}\epsilon^{2}+(1-e^{-C_1|u|^{2}\epsilon})+|u|^{6}e^{-C_{1}|u|^{2}\epsilon}\eta_{n}\nonumber\\
	&\quad\quad \leq \epsilon \nu+(1-e^{-C_1\nu})+\frac{|u|^{6}\eta_{n}}{1+|u|^6\eta_n}.\label{eq:g3}
\end{align}
We get from \eqref{eq:eps} that $\epsilon\leq 1$ and $\nu^p\lesssim \epsilon$ uniformly in $u,n$ and any $p\in\N$ and that $\epsilon,\nu \rightarrow 0$ for any fixed $u\in\R^d$ as $n\rightarrow\infty$.  
Consequently, the terms in \eqref{eq:g1}, \eqref{eq:g2} and \eqref{eq:g3} are uniformly in $u,n$ bounded and converge to zero for any fixed $u\in\R^d$ as $n\rightarrow \infty$.  Lemmas \ref{lem:F_2_SM_err_1}, \ref{lem:F_2_SM_err_2} and \ref{lem:F_2_SM_3} then show that $g_n^{(2,1)}(u)$ and $g_n^{(2,2)}(u)$ and thus also $g_n^{(2)}(u)$ are uniformly in $u,n$ bounded and converge to zero for any fixed $u\in\R^d$ as $n\rightarrow\infty$, which is what we wanted to prove.
\end{proof}

\begin{lem}
\label{lem:F_2_SM_err_1} In Lemma \ref{lem:F_2_bound_SM} it holds $$g_n^{(2,1)}(u)\lesssim |u|^{4-2s}\epsilon^{2\alpha} +|u|^{6-2s}\epsilon^{2+2\beta}+|u|^{6-2s}\epsilon^{1+2\alpha}.$$
\end{lem}

\begin{proof}
Let $k\geq1$, $t_{k-1}\leq r<t_{k}$. Since $r-r(\epsilon)\lesssim\epsilon$, Assumption \aswithargsref{assu:sigmaB_local}{S_{loc}}{\alpha}{\beta} and  Lemma \ref{lem:estimatesForSM}(ii) provide the approximation errors 
\begin{align*}
	&\E[\sup_{0\leq r\leq T}|\sigma_{r}-\sigma_{r(\epsilon)}|{}^{2}]\lesssim\epsilon^{2\alpha},\quad \E[|X_{r}-\widetilde{X}_{r}(r(\epsilon))|^{2}]\lesssim\epsilon^{2\beta+2}+\epsilon^{2\alpha+1}.
\end{align*}
Since $\tilde{X}_r(r)=X_r$ and recalling that $\sigma$ is uniformly bounded by Assumption \aswithargsref{assu:sigmaB_local}{S_{loc}}{\alpha}{\beta},  this further gives
\begin{align*}
	\E[|U_{r,r}-U_{r,r(\epsilon)}|^{2}] 
		& \lesssim  \E\left[\left| |\sigma_{r}^{\top}u|^{2} - |\sigma_{r(\epsilon)}^{\top}u|^{2}\right|^2\right] + |u|^4 \E\left[\left|e^{-i\sc u{X_{r}}}-e^{-i\sc u{\widetilde{X}_{r}(r(\epsilon))}}\right|^{2}\right]\\
		& \lesssim |u|^{4}\epsilon^{2\alpha} +|u|^{6}\epsilon^{2+2\beta}+|u|^{6}\epsilon^{1+2\alpha}.
\end{align*}
The result follows thus from
\begin{align*}
	(g_n^{(2,1)}(u))^{1/2} 
		& = \Delta_n^{-1}|u|^{-s}\E\left[\sup_{0\leq t\leq T}|F_{t,n}^{(2)}(u)-\check{F}_{t,n}^{(2)}(u)|^{2}\right]^{1/2}\\
		& \leq \Delta_n^{1/2} |u|^{-s} \sum_{k=1}^n \left(\int_{t_{k-1}}^{t_k} \E[|U_{r,r}-U_{r,r(\epsilon)}|^{2}] dr \right)^{1/2}\\
		& \lesssim  |u|^{2-s}\epsilon^{\alpha} +|u|^{3-s}\epsilon^{1+\beta}+|u|^{3-s}\epsilon^{1/2+\alpha}.
\end{align*}
\end{proof}
\begin{lem}
\label{lem:F_2_SM_err_2} In the setting of Lemma \ref{lem:F_2_bound_SM} there exists a sequence $\eta_n\geq 0$ with $\eta_n\rightarrow 0$ and such that for $\epsilon \leq \Delta_n$  we have $$g_n^{(2,2)}(u)\lesssim |u|^{4-2s}\epsilon e^{-C_1|u|^{2}\epsilon}+|u|^{2-2s} (1-e^{-C_1|u|^{2}\Delta_{n}})+|u|^{8-2s}e^{-C_{1}|u|^{2}\epsilon}\eta_{n}.$$ 
\end{lem}

\begin{proof}
Observe first the following fact by the Burkholder-Gundy
inequality: If $(\mathcal{G}_{k})_{k\in\{1,\dots,K\}}$ for $K\in\N$
is a discrete filtration with $\c G_{k}$-measurable and square
integrable random variables $R_{k}$, then
\begin{align}
\E[\max_{m\in\{1,\dots,K\}}|\sum_{k=1}^{m}R_{k}|^{2}] & \lesssim\sum_{k=1}^{K}\E[|R_{k}|^{2}]+\E[\sup_{m\in\{1,\dots,K\}}|\sum_{k=1}^{m}\E[R_{k}|\c G_{k-1}]|^{2}].\label{eq:BDG}
\end{align}
Let now $|u|>1$ and $\epsilon\leq\Delta_{n}$. Set $Z_{k}:=\int_{t_{k-1}}^{t_{k}}(t_{k}-r-\frac{\D}{2})U_{r,r(\epsilon)}dr$
for $k\in\{1,\dots,n\}$ such that
\[
\E[\sup_{0\leq t\leq T}|\check{F}_{t,n}^{(2)}(u)|^{2}]\lesssim\E[|Z_{1}|^{2}]+\E[\max_{m\in\{2,\dots,n\}}|\sum_{k=2}^{m}Z_{k}|^{2}].
\]
We will show that there exist $0\leq\eta_{n}\rightarrow0$ as $n\rightarrow\infty$ such that
\begin{align}
& k\geq1: \,\E[|Z_{k}|^{2}]\lesssim\Delta_{n}^{3}|u|^{2}\left(|u|^{2}\epsilon e^{-C_1|u|^{2}\epsilon}+(1-e^{-C_1|u|^{2}\Delta_{n}})\right),\label{eq:Z_bound_1}\\
 & \,\E[\max_{k\in\{2,\dots,n\}}|\E[Z_{k}|\c F_{t_{k-2}}]|^{2}]\lesssim\Delta_{n}^{4}|u|^{8}e^{-C_{1}|u|^{2}\epsilon}\eta_{n}.\label{eq:Z_bound_2}
\end{align}
Assuming this holds,  we can apply \eqref{eq:BDG} to $R_{k}=Z_{k+1}$, $k\in\{1,\dots,n-1\}$,
$\c G_{k}=\c F_{t_{k}}$ such that by \eqref{eq:Z_bound_1} and \eqref{eq:Z_bound_2}
\begin{align*}
 & g_n^{(2,2)}(u) = \Delta_n^{-2}(1+|u|^2)^{-s} \E[\sup_{0\leq t\leq T}|\check{F}_{t,n}^{(2)}(u)|^{2}] \\
 & \quad \lesssim \Delta_n^{-2}|u|^{-2s}\left(\E[|Z_{1}|^{2}]+\Delta_{n}^{-1}\max_{k\in\{2,\dots,n\}}\E[|Z_{k}|^{2}]+\Delta_{n}^{-2}\max_{k\in\{2,\dots,n\}}\E[|\E[Z_{k}|\c F_{t_{k-2}}]|^{2}]\right)\\
 & \quad\lesssim |u|^{4-2s}\epsilon e^{-C_1|u|^{2}\epsilon}+|u|^{2-2s} (1-e^{-C_1|u|^{2}\Delta_{n}})+|u|^{8-2s}e^{-C_{1}|u|^{2}\epsilon}\eta_{n},
\end{align*}
which proves the claim.  

Let us next show (\ref{eq:Z_bound_1}) and (\ref{eq:Z_bound_2}).  Fix
$k\geq1$ and note that
\begin{align*}
\E[|Z_{k}|^{2}] & \lesssim\Delta_{n}^{2}\int_{t_{k-1}}^{t_{k}}\int_{t_{k-1}}^{t_{k}}|\E[U_{r,r(\epsilon)}\overline{U_{r',r'(\epsilon)}}]|drdr'.
\end{align*}
Let $t_{k-1}\leq r'\leq r<t_{k}$ and $r_{*}=\max(r(\epsilon),r')$.
From (\ref{eq:X_approx}) it follows that $\sc u{\widetilde{X}_{r}(r(\epsilon))-\widetilde{X}_{r_{*}}(r(\epsilon))}$
has conditional on $\c F_{r_{*}}$ is $N(\sc u{b_{r(\epsilon)}}(r-r_{*}),|\sigma_{r(\epsilon)}^{\top}u|^{2}(r-r_{*}))$-distributed.  Hence, 
\begin{align*}
	\left|\E[e^{-i\sc u{\widetilde{X}_{r}(r(\epsilon))-\widetilde{X}_{r_{*}}(r(\epsilon))}}|\c F_{r_{*}}]\right| = e^{\frac{1}{2}|\sigma_{r(\epsilon)}^{\top}u|^{2}(r-r_{*})} \leq e^{-C_1|u|^2(r-r_*)}
\end{align*}
with $C_1$ from \eqref{eq:eps}.  The tower property of conditional expectation and $|U_{r',r'(\epsilon)}|\lesssim |u|^2$ gives
\begin{align}
|\E[U_{r,r(\epsilon)}\overline{U_{r',r'(\epsilon)}}]| & =|\E[\E[e^{-i\sc u{\widetilde{X}_{r}(r(\epsilon))-\widetilde{X}_{r_{*}}(r(\epsilon))}}|\c F_{r_{*}}]e^{-i\sc u{\widetilde{X}_{r_{*}}(r(\epsilon))}}|\sigma_{r(\epsilon)}^{\top}u|^{2}\overline{U_{r',r'(\epsilon)}}]|\nonumber \\
 & \lesssim|u|^{4}\E[|\E[e^{-i\sc u{\widetilde{X}_{r}(r(\epsilon))-\widetilde{X}_{r_{*}}(r(\epsilon))}}|\c F_{r_{*}}]|]\lesssim|u|^{4}e^{-C_1|u|^{2}(r-r_{*})}.\label{eq:Ucross}
\end{align}
The condition $\epsilon\leq\Delta_{n}$ yields $\min(\epsilon,r-r')\leq r-r_{*}\leq\Delta_{n}$
such that
\begin{align*}
\E[|Z_{k}|^{2}] & \lesssim\Delta_{n}^{2}|u|^{4}\left(\Delta_{n}^{2}e^{-C_1|u|^{2}\epsilon}+\int_{t_{k-1}}^{t_{k}}\int_{t_{k-1}}^{t_{k}}e^{-C_1|u|^{2}(r-r')}drdr'\right)\\
 & \lesssim\Delta_{n}^{3}|u|^{2}\left(|u|^{2}\epsilon e^{-C_1|u|^{2}\epsilon}+(1-e^{-C_1|u|^{2}\Delta_{n}})\right).
\end{align*}
This proves (\ref{eq:Z_bound_1}). To see why (\ref{eq:Z_bound_2})
holds, let $k\geq2$. Since $\int_{t_{k-1}}^{t_{k}}(t_{k}-r-\Delta_{n}/2)dr$
vanishes, the same holds for $U_{t_{k},t_{k-2}}=U_{t_{k},t_{k-2},t_{k-2}}$
and thus
\begin{align*}
|\E[Z_{k}|\c F_{t_{k-2}}]| & \lesssim\Delta_{n}\int_{t_{k-1}}^{t_{k}}|\E[U_{r,r(\epsilon),r(\epsilon)}-U_{r,r(\epsilon),t_{k-2}}|\c F_{t_{k-2}}]|dr\\
 & \quad+\Delta_{n}\int_{t_{k-1}}^{t_{k}}|\E[U_{r,r(\epsilon),t_{k-2}}-U_{r,t_{k-2},t_{k-2}}|\c F_{t_{k-2}}]dr.
\end{align*}
Set $r^{*}=\max(r(\epsilon),t_{k-2})$ for $t_{k-1}\leq r<t_{k}$
and write
\begin{align*}
\widetilde{X}_{r}(r(\epsilon)) & =\sigma_{r(\epsilon)}(W_{r}-W_{r^{*}})+b_{r(\epsilon)}(r-r^{*})+\widetilde{X}_{r^{*}}(r(\epsilon)),\\
\widetilde{X}_{t_{k}}(t_{k-2}) & =\sigma_{t_{k-2}}(W_{t_{k}}-W_{r^{*}})+\sigma_{t_{k-2}}(W_{r}-W_{r^{*}})+b_{t_{k-2}}(t_{k}-r^{*})+\widetilde{X}_{r^{*}}(t_{k-2}).
\end{align*}
Conditioning on $\c F_{r^{*}}$ shows 
\begin{align*}
 & |\E[U_{r,r(\epsilon),r(\epsilon)}-U_{r,r(\epsilon),t_{k-2}}|\c F_{r^{*}}]|\\
 & \lesssim|u|^{2}|\sigma_{r(\epsilon)}-\sigma_{t_{k-2}}|\,|\E[e^{-i\sc u{\widetilde{X}_{r}(r(\epsilon))}}|\c F_{r^{*}}]|\lesssim|u|^{2}\Delta_{n}e^{-C|u|^{2}\epsilon},
\end{align*}
using that $2\Delta_{n}\geq r-r^{*}\geq\epsilon$ (because $\epsilon\leq\Delta_{n}$).
On the other hand, assuming first that $R=|\sigma_{r(\epsilon)}^{\top}u|^{2}-|\sigma_{t_{k-2}}^{\top}u|^{2}\geq0$,
we have
\begin{align*}
 & |\E[U_{r,r(\epsilon),t_{k-2}}-U_{t_{k},t_{k-2},t_{k-2}}|\c F_{r^{*}}]|\\
 & \lesssim|u|^{2}|\E[e^{-i\sc u{\widetilde{X}_{r}(r(\epsilon))}}-e^{-i\sc u{\widetilde{X}_{t_{k}}(t_{k-2})}}|\c F_{r^{*}}]|\\
 & =|u|^{2}e^{-\frac{1}{2}|\sigma_{t_{k-2}}^{\top}u|^{2}(r-r^{*})}|e^{-\frac{1}{2}R(r-r^{*})-i\sc u{b_{r(\epsilon)}(r-r^{*})+\widetilde{X}_{r^{*}}(r(\epsilon))}}\\
 & \qquad-e^{-\frac{1}{2}|\sigma_{t_{k-2}}^{\top}u|^{2}(t_{k}-r^{*})-i\sc u{b_{t_{k-2}}(r-r^{*})+\widetilde{X}_{r^{*}}(t_{k-2})}}|\\
 & \lesssim|u|^{4}e^{-C|u|^{2}\epsilon}(\Delta_{n}+|\widetilde{X}_{r^{*}}(r(\epsilon))-\widetilde{X}_{r^{*}}(t_{k-2})|),
\end{align*}
using in the last line $|u|>1$, $R\geq0$ and again $2\Delta_{n}\geq r-r^{*}\geq\epsilon$.
The same upper bound is obtained for $R<0$ by taking $e^{-\frac{1}{2}|\sigma_{r(\epsilon)}^{\top}u|^{2}(r-r^{*})}$
out of the absolute value above instead of $e^{-\frac{1}{2}|\sigma_{t_{k-2}}^{\top}u|^{2}(r-r^{*})}$.
We thus find
\begin{equation}
\E[\sup_{k\in\{1,\dots,n\},t_{k-1}\leq r<t_{k}}|\E[U_{r,r(\epsilon),t_{k-2}}-U_{t_{k},t_{k-2},t_{k-2}}|\c F_{r^{*}}]|^{2}]\lesssim|u|^{8}e^{-C_{1}|u|^{2}\epsilon}\eta_{n},\label{eq:eta_n}
\end{equation}
where $0\leq\eta_{n}\rightarrow0$ for $n\rightarrow\infty$ due to
Lemma \ref{lem:estimatesForSM}(iii). In all, this shows (\ref{eq:Z_bound_2}) and ends the proof.
\end{proof}
\begin{lem}
\label{lem:F_2_SM_3} In the setting of Lemma \ref{lem:F_2_bound_SM} we have for $\epsilon > \Delta_n$ that $$g_n^{(2,2)}(u)\lesssim |u|^{4-2s}\epsilon^{2}+|u|^{2-2s}(1-e^{-C_1|u|^{2}\epsilon})+|u|^{8-2s}e^{-C_{1}|u|^{2}\epsilon}\eta_{n}.$$ 
\end{lem}

\begin{proof}
Let $|u|>1$ and $\epsilon>\Delta_{n}$. We first fix some notation.
Let 
\[
I_{j}(t)=\{k=1,\dots,\lfloor t/\Delta_{n}\rfloor:(j-1)\varepsilon<t_{k}\leq j\varepsilon\},\,\,\,\,1\leq j\leq\lceil T/\epsilon\rceil,
\]
be the set of those $k\leq\lfloor t/\D\rfloor$ such that $t_{k}\leq t$
lies in the interval $((j-1)\varepsilon,j\varepsilon]$. Let $Z_{k}$
be as in Lemma \ref{lem:F_2_SM_err_2} and set $A_{t}^{(j)}=\sum_{k\in I_{j}(t)}Z_{k}$
such that $\check{F}_{t,n}^{(2)}(u)=\sum_{j=1}^{\lceil T/\epsilon\rceil}A_{t}^{(j)}$.
For $t\geq0$ denote by $j(t)$ the unique $j\in\{1,\dots,\lceil T/\epsilon\rceil\}$
with $(j-1)\varepsilon<t\leq j\varepsilon$. If $t\leq(j-1)\epsilon$,
then $I_{j}(t)$ is empty and $A_{t}^{(j)}=0$, while for $t>j\epsilon$
we have $I_{j}(t)=I_{j}(T)$ and $A_{t}^{(j)}=A_{T}^{(j)}$. This
means $\check{F}_{t,n}^{(2)}(u)=\sum_{j=1}^{j(t)-1}A_{T}^{(j)}+A_{t}^{(j(t))}$.
Using the trivial bound $|U_{r,r(\epsilon)}|\lesssim|u|^{2}$ for
$r\geq0$ and the fact that $I_{j}(t)$ contains at most $2\epsilon\Delta_{n}^{-1}$
many $k$, we get $|A_{t}^{(j)}|\lesssim\Delta_{n}|u|^{2}\epsilon$
for all $1\leq j\leq\lceil T/\epsilon\rceil$ and therefore
\[
\sup_{0\leq t\leq T}|\check{F}_{t,n}^{(2)}(u)|\lesssim\max_{m\in\{3,\dots,\lceil T/\epsilon\rceil\}}|\sum_{j=3}^{m}A_{T}^{(j)}|+\Delta_{n}|u|^{2}\epsilon.
\]
Applying (\ref{eq:BDG}) three times (first with $R_{k}=A_{T}^{(k+2)}\in\c G_{k}=\c F_{(k+2)\epsilon}$,
$k\in\{1,\dots,\lceil T/\epsilon\rceil-2\}$, then with $R_{k}=\E[A_{T}^{(k+2)}|\c F_{(k+1)\epsilon}]\in\c G_{k}=\c F_{(k+1)\epsilon}$,
and finally with $R_{k}=\E[A_{T}^{(k+2)}|\c F_{k\epsilon}]\in\c G_{k}=\c F_{k\epsilon}$) yields
\begin{align*}
  \E[\sup_{0\leq t\leq T}|\check{F}_{t,n}^{(2)}(u)|^{2}] 
 	& \lesssim \epsilon^{-1}\max_{j\in\{3,\dots,\lceil T/\epsilon\rceil\}}\E[|A_{T}^{(j)}|^{2}]\\
 	&\quad +\epsilon^{-2}\max_{j\in\{3,\dots,\lceil T/\epsilon\rceil\}}\E[|\E[A_{T}^{(j)}|\c F_{(j-3)\epsilon}]|^{2}]+\Delta_{n}^{2}|u|^{4}\epsilon^{2}.
\end{align*}
We show below for $j\geq3$ (cf. (\ref{eq:Z_bound_1}), (\ref{eq:Z_bound_2}))
that
\begin{align}
\E[|A_{T}^{(j)}|^{2}] & \lesssim\Delta_{n}^{2}|u|^{2}\epsilon(1-e^{-C_1|u|^{2}\epsilon}),\label{eq:A_t_1}\\
\E[|\E[A_{T}^{(j)}|\c F_{(j-3)\epsilon}]|^{2}] & \lesssim\Delta_{n}^{2}\epsilon^{2}|u|^{8}e^{-C_{1}|u|^{2}\epsilon}\eta_{n},\label{eq:A_t_2}
\end{align}
with $\eta_{n}$ from (\ref{eq:Z_bound_2}). Plugging
these bounds into the last display gives 
\begin{align*}
 & g_n^{(2,2)}(u) = \Delta_n^{-2}(1+|u|^2)^{-s} \E[\sup_{0\leq t\leq T}|\check{F}_{t,n}^{(2)}(u)|^{2}] \\
 & \quad \lesssim \Delta_n^{-2}|u|^{-2s}\left(\E[|Z_{1}|^{2}]+\Delta_{n}^{-1}\max_{k\in\{2,\dots,n\}}\E[|Z_{k}|^{2}]+\Delta_{n}^{-2}\max_{k\in\{2,\dots,n\}}\E[|\E[Z_{k}|\c F_{t_{k-2}}]|^{2}]\right)\\
 & \quad\lesssim |u|^{2-2s}(1-e^{-C_1|u|^{2}\epsilon})+|u|^{8-2s}e^{-C_{1}|u|^{2}\epsilon}\eta_{n}+|u|^{4-2s}\epsilon^{2}.
\end{align*}

Let us now prove (\ref{eq:A_t_1}) and (\ref{eq:A_t_2}). For (\ref{eq:A_t_1})
let $k,k'\in I_{j}(T)$, $j\geq3$ and consider $t_{k-1}\leq r<t_{k}$,
$t_{k'-1}\leq r'<t_{k'}$, $r'\leq r$. Since $\epsilon>\Delta_{n}$,
we have $r(\epsilon)\leq r'$, implying by \eqref{eq:Ucross} with
$r_{*}=r'$ that $|\E[U_{r,r(\epsilon)}\overline{U_{r',r'(\epsilon)}}]|\lesssim|u|^{4}e^{-C_1|u|^{2}(r-r')}$.
As also $(j-2)\epsilon\leq t_{k-1},t_{k'-1}$, this shows
\begin{align*}
\E[|A_{T}^{(j)}|^{2}] & \lesssim\Delta_{n}^{2}\sum_{k,k'\in I_{j}(T)}\int_{t_{k-1}}^{t_{k}}\int_{t_{k'-1}}^{t_{k'}}|\E[U_{r,r(\epsilon)}\overline{U_{r',r'(\epsilon)}}]|drdr'\\
 & \lesssim\Delta_{n}^{2}|u|^{4}\int_{(j-2)\epsilon}^{j\epsilon}\int_{(j-2)\epsilon}^{j\epsilon}e^{-C_1|u|^{2}\left|r-r'\right|}drdr'\\
 & \lesssim\Delta_{n}^{2}|u|^{2}\epsilon(1-e^{-C_1|u|^{2}\epsilon}),
\end{align*}
proving (\ref{eq:A_t_1}). For (\ref{eq:A_t_2}), on the other hand,
we have
\begin{align*}
& \E[|\E[A_{T}^{(j)}|\c F_{(j-3)\epsilon}]|^{2}] \lesssim\Delta_{n}^{2}\E[|\sum_{k\in I_{j}(T)}\int_{t_{k-1}}^{t_{k}}|\E[U_{r,r(\epsilon)}-U_{t_{k},(j-3)\epsilon}|\c F_{(j-3)\epsilon}]|dr|^{2}]\\
 & \quad \lesssim\Delta_{n}^{4}(\epsilon/\Delta_{n})^{2}\sup_{k\in\{1,\dots,n\},t_{k-1}\leq r<t_{k}}\E[|\E[U_{r,r(\epsilon)}-U_{t_{k},(j-3)\epsilon}|\c F_{(j-3)\epsilon}]|^{2}]\\
 & \quad  \lesssim\Delta_{n}^{2}\epsilon^{2}|u|^{8}e^{-C_{1}|u|^{2}\epsilon}\eta_{n},
\end{align*}
using (\ref{eq:eta_n}) in the last line, which holds here exactly
as above if we set $r^{*}=\max(r(\epsilon),(j-3)\epsilon)$ and recall
that $\epsilon>\Delta_{n}$. 
\end{proof}

\subsection{Proof of Theorem \ref{thm:CLT_II}}

For the proof set $\xi=X_0$ and replace $X$ by $X-X_0$. Since $X$ with deterministic $b$ and $\sigma$ has independent increments,  $X$ and $\xi$ are independent.  Under the stated assumptions on $b$ and $\sigma$, Assumption \aswithargsref{assu:sigmaB_local}{S_{loc}}{0}{0} holds true with $\alpha=\beta=0$, except for the boundedness of $X$ on $[0,T]$.  Even without this property Lemmas \ref{lem:estimatesForSM} and \ref{lem:estimates_f} hold true (the boundedness of $X$ was not used in the proofs).  We can now repeat the arguments in Section \ref{subsubsec:mainDecomp} and obtain the claimed result for $f\in H^1(\R^d)$ by the decomposition \eqref{eq:decomposition} and by applying Propositions \ref{prop:M_convergence_H1} and \ref{prop:E_convergence_H1}. We are left with showing $\Delta_n^{-1}D_{t,n}(f)\r{\P}0$, which follows from the next proposition. As compared to the general semimartingale case the key property for deterministic $b,\sigma$ is that the characteristic functions of the marginals $X_t$ can be computed explicitly. 
\begin{prop}
\label{prop:D_II} Suppose that $\xi$ has a bounded Lebesgue density and assume that $b,\sigma$ are deterministic càdlàg functions and that $\sup_{0\leq t\leq T}|(\sigma_{t}\sigma_{t}^{\top})^{-1}|<\infty$.  Then we have for $f\in H^{1}(\R^{d})$
and $0\leq t\leq T$
\begin{equation}
\Delta_{n}^{-1}D_{t,n}\left(f\right)\r{\P}0.
\end{equation}
\end{prop}

\begin{proof}
As in Section \ref{sec:D} apply the upper bound \eqref{eq:D} and define the $g_n^{(i)}$ this time without the supremum over $0\leq t\leq T$ as
\begin{align}
	g_n^{(i)}(u) = \Delta_n^{-2}(1+|u|^2)^{-1}\E\left[|F_{t,n}^{(i)}(u)|^{2}\right]
\end{align}
with $F_{t,n}^{(i)}(u)$ in \eqref{eq:F_t_n_1} and \eqref{eq:F_t_n_2}.  In order to conclude as after \eqref{eq:gFun} by dominated convergence, we need to show $g_n^{(i)}(u)\rightarrow 0$ as $n\rightarrow\infty$ for all $u\in\R^d$ and $\sup_{n\in\N, u\in\R^d}g_n^{(i)}(u) <\infty$.  For $g_n^{(1)}(u)$ this follows from repeating the proof of Lemma \ref{lem:F_1_bound} word for word using instead of the approximation property of $b$ in Assumption \aswithargsref{assu:sigmaB_local}{S_{loc}}{0}{0} that $b$ is càdlàg. 

Next,  consider $g_n^{(2)}(u)$.  The assumptions on $b$ and $\sigma$ imply that $\sc u{X_{r}-X_{h}}$ is independent
of $\c F_{h}$ for all $0\leq h<r\leq t$ and is $N(\int_{h}^{r}\sc u{b_{r'}}dr',\int_{h}^{r}|\sigma_{r'}^{\top}u|^{2}dr')$-distributed.  This means 
\begin{equation}
\E\left[e^{-i\sc u{X_{r}-X_{h}}}\bigg|\c F_h\right]=\E\left[e^{-i\sc u{X_{r}-X_{h}}}\right]=e^{-\frac{1}{2}\int_{h}^{r}|\sigma_{r'}^{\top}u|^{2}dr'}.\label{eq:charFun}
\end{equation}
From this we find that $4\E\left[|F_{t,n}^{(i)}(u)|^{2}\right]$ equals
\begin{align*}
& =\E\left[\left|\sum_{k=1}^{\lfloor t/\Delta_{n}\rfloor}e^{-i\sc u{X_{t_{k-1}}}}\int_{t_{k-1}}^{t_{k}}\left(t_{k}-r-\frac{\D}{2}\right)\left|\sigma_{r}^{\top}u\right|^{2}\E\bigg[e^{-i\sc u{X_{r}-X_{t_{k-1}}}}\bigg|\c F_{t_{k-1}}\bigg]dr\right|^2\right]\\
 &=\E\left[\left|\sum_{k=1}^{\lfloor t/\Delta_{n}\rfloor}e^{-i\sc u{X_{t_{k-1}}}}\int_{t_{k-1}}^{t_{k}}\left(t_{k}-r-\frac{\D}{2}\right)\left|\sigma_{r}^{\top}u\right|^{2}e^{-\frac{1}{2}\int_{t_{k-1}}^r|\sigma^\top_{r'}u|^2 dr'}dr\right|^2\right].
\end{align*}
Introduce the family of functions $\kappa(u,r)=\left|\sigma_{r}^{\top}u\right|^{2}e^{-\frac{1}{2}\int_{t_{k-1}}^r|\sigma^\top_{r'}u|^2 dr'}$ and with this $$\bar{\kappa}(k,u)=\int_{t_{k-1}}^{t_{k}}\left(t_{k}-r-\frac{\D}{2}\right)(\kappa(r,u)-\kappa(t_{k-1},u))dr.$$Since $\int_{t_{k-1}}^{t_{k}}(t_{k}-r-\D/2)dr=0$,  we find that
\begin{align*}
4\E\left[|F_{t,n}^{(i)}(u)|^{2}\right]
	&= \E\left[\left|\sum_{k=1}^{\lfloor t/\Delta_{n}\rfloor}e^{-i\sc u{X_{t_{k-1}}}}\bar{\kappa}(k,u)\right|^2\right]\\
	&=\sum_{k=1}^{\lfloor t/\Delta_{n}\rfloor}\sum_{k'=1}^{\lfloor t/\Delta_{n}\rfloor}\bar{\kappa}(k,u)\bar{\kappa}(k',u)\E\left[e^{-i\sc u{X_{t_{k-1}}-X_{t_{k'-1}}}}\right].
\end{align*}
The ellipticity of $\sigma_r\sigma^\top$ implies the existence of a constant $C_1>0$ with $\inf_{r}|\sigma_{r}^{\top}u|^{2}=\inf_{r}\sc{\sigma_{r}\sigma_{r}^{\top}u}u\geq C_1|u|^{2}$. It thus follows from \eqref{eq:charFun} that
\begin{align*}
	\E\left[e^{-i\sc u{X_{t_{k-1}}-X_{t_{k'-1}}}}\right]\leq e^{-C_1 |u|^2 |t_{k-1}-t_{k'-1}|}.
\end{align*}
Using this in the last display and upper bounding the integrand in $\bar{\kappa}(k,u)$ yields at last
\begin{align*}
	g_n^{(2)}(u) 
		&\leq \Delta_n^2|u|^2\sum_{k,k'=1}^n e^{-C_1 |u|^2 |t_{k-1}-t_{k'-1}|} \sup_{k\in\{1,\dots\},t_{k-1}\leq r\leq t_k} \frac{|\kappa(r,u)-\kappa(t_{k-1},u)|^2}{|u|^4}.
\end{align*}
As $\sigma$ is càdlàg, observe that  $|u|^{-4}\sup_{k\in\{1,\dots\},t_{k-1}\leq r\leq t_k} |\kappa(r,u)-\kappa(t_{k-1},u)|^2\rightarrow 0$ for any fixed $u\in\R^d$ as $n\rightarrow\infty$ and that
\begin{align*}
	\sup_{n\in\N,u\in\R^d}\sup_{k\in\{1,\dots\},t_{k-1}\leq r\leq t_k} \frac{|\kappa(r,u)-\kappa(t_{k-1},u)|^2}{|u|^4} <\infty,
\end{align*} 
while also
\begin{align*}
	& \Delta_n^2|u|^2\sum_{k,k'=1}^n e^{-C_1 |u|^2 |t_{k-1}-t_{k'-1}|}
		\leq  |u|^2 \int_0^T\int_0^T e^{-C_1 |u|^2 |t-t'|}dtdt'\\
	&\quad\quad \leq 2|u|^2 \int_0^T\int_{t'}^Te^{-C_1 |u|^2 (t-t')}dtdt' \lesssim |u|^2 \int_0^T e^{-C_1 |u|^2 t}dt.
\end{align*}
From this obtain  $g_n^{(i)}(u)\rightarrow 0$ as $n\rightarrow\infty$ for all $u\in\R^d$ and $\sup_{n\in\N, u\in\R^d}g_n^{(i)}(u) <\infty$, which is what we still needed to show.
\end{proof}

\subsection{Proof of Theorem \ref{thm:CLT_III}}\label{sec:CLT_III}

Recall that $X$ has independent increments as stated in the proof of Theorem \ref{thm:CLT_II}. The estimation error for $t\in [0,\Delta]$ is treated separately. Write $\Gamma_{t}(f)-\widehat{\Gamma}_{t,n}(f)=E_{0,n}+E_{1,n}$
with 
\begin{align*}
E_{0,n} =\int_{0}^{\Delta_{n}}(f(X_{r})-f(0))dr,\quad E_{1,n} =\sum_{k=2}^{\lfloor t/\Delta_{n}\rfloor}\int_{t_{k-1}}^{t_{k}}(f(X_{r})-f(X_{t_{k-1}}))dr.
\end{align*}
By a Sobolev embedding deduce for $f\in H^1(\R)$ that $f$ is $\gamma = 1/2$-Hölder continuous, that is, 
\begin{align}
	\sup_{x\neq y} \frac{|f(x)-f(y)|}{|x-y|^\gamma} < \infty.\label{eq:Holder}
\end{align}
In particular, 
\begin{align*}
	\E[|E_{0,n}|] \lesssim \Delta_n \sup_{0\leq r\leq \Delta_n} \E[|X_r|^{\gamma}],
\end{align*}
implying $E_{0,n}=o_{\P}(\Delta_{n})$.  For $E_{1,n}$,  we use the decomposition \eqref{eq:decomposition} (with sums starting at $k=2$) and aim at applying Propositions \ref{prop:M_convergence_H1} and \ref{prop:E_convergence_H1} to $M_{t,n}(f)$ and $E_{t,n}(f)$ with $Y=X$ (that is with $\xi=0$).  The respective proofs depend on $\xi$ only through applications of Lemma \ref{lem:estimates_f} and a specific argument for \eqref{eq:J2}.  

We first check that the statements (i)-(v) of Lemma \ref{lem:estimates_f} hold for $f\in H^1(\R)$, $Y=X$ and $k\geq 2$.  Part (v) of that lemma holds again by the Hölder continuity in \eqref{eq:Holder}.  The corresponding statements in parts (i)-(iv), by independence of increments,
expressions of the form $\sum_{k=2}^{n}\int_{t_{k-1}}^{t_{k}}\E[m^2_{r}(X_{t_{k-1}})]dr$ need to be bounded for certain random processes $m_{r}$. Denote the Lebesgue density of $X_{t_{k-1}}$ by $p_{t_{k-1}}$.  Due to Gaussianity and the non-degeneracy of
$\sigma$ we have $p_{t_{k-1}}\lesssim t_{k-1}^{-1/2}$ such that
\[
\int_{t_{k-1}}^{t_{k}}\E[m^2_{r}(X_{t_{k-1}})]dr\lesssim t_{k-1}^{-1/2}\int_{t_{k-1}}^{t_{k}}\int_{\R}\E[m^2_{r}(x)]dx\,dr.
\]
Using now the second inequality in Lemma \ref{lem:plancherel} the proofs of (i)-(iv) in Lemma  \ref{lem:estimates_f} provide
uniform bounds on $\sup_{0\leq r\leq T}\int_{\R}m^2_{r}(x)dx$. Since $\Delta_{n}\sum_{k=2}^{n}t_{k-1}^{-1/2}$
is summable,  (i)-(iv) of Lemma \ref{lem:estimates_f} (with $k\geq 2$) remain true.  The modification for \eqref{eq:J2} is analogous and therefore skipped.  The conclusions of Propositions \ref{prop:M_convergence_H1} and  \ref{prop:E_convergence_H1}
hold true, and we conclude by the following proposition.

\begin{prop}
\label{prop:D_III} Let $d=1$ and suppose that $b,\sigma$ are deterministic càdlàg functions and that $\inf_{0\leq t\leq T}\sigma_{t}^{2}>0$.  Then we have for $f\in H^{1}(\R)$, $\xi=0$ 
and $0\leq t\leq T$
\begin{equation*}
\Delta_{n}^{-1}D_{t,n}\left(f\right)\r{\P}0.
\end{equation*}
\end{prop}

\begin{proof}
By independence of increments write $D_{t,n}(f) = \sum_{k=2}^{\lfloor t/\Delta_{n}\rfloor} h_k(X_{t_{k-1}})$ with
\begin{align*}
h_{k}(x) & =\int_{t_{k-1}}^{t_{k}}\E[f(X_{r}-X_{t_{k-1}}+x)-f(x)\\
 & -\frac{\Delta_{n}}{2}(f(X_{t_{k}}-X_{t_{k-1}}+x)-f(x))]dr.
\end{align*}
Decompose $\E[|D_{t,n}(f)|^2]=R_{1}+R_{2}$
with $R_{1}=\sum_{k=2}^{\lfloor t/\Delta_{n}\rfloor}\E[h_{k}(X_{t_{k-1}})^{2}]$,
$R_{2}=\sum_{k\neq j,  k,j\geq 2}^{\lfloor t/\Delta_{n}\rfloor}\E[h_{k}(X_{t_{k-1}})h_{j}(X_{t_{j-1}})]$. It suffices to show $R_{1}=o(\Delta_{n}^{2})$ and $R_{2}=o(\Delta_{n}^{2})$. 

By Lemma \ref{lem:estimates_f}(iii) (which can be applied by the arguments at the beginning of this section) and the Cauchy-Schwarz inequality we have $R_1 = \sum_{k=2}^{\lfloor t/\Delta_{n}\rfloor}\E[\tilde{h}^2_k(X_{t_{k-1}})] + o(\Delta_n^2)$ with
\begin{align*}
\tilde{h}_{k}(x) 
	& =\int_{t_{k-1}}^{t_{k}}\E\left[f'(x)\left(X_r-X_{t_{k-1}}  -\frac{\Delta_{n}}{2} (X_{t_k}-X_{t_{k-1}})\right )\right ]dr \\
	& = f'(x)\E\left[\int_{t_{k-1}}^{t_{k}}(t_k-r-\frac{\Delta_{n}}{2})d X_r\right]=f'(x)\int_{t_{k-1}}^{t_{k}}(t_k-r-\frac{\Delta_{n}}{2})b_r dr,
\end{align*}
concluding by the stochastic Fubini theorem in the last line.  Hence,
\begin{align*}
  \sum_{k=2}^{\lfloor t/\Delta_{n}\rfloor}\E[\tilde{h}^2_k(X_{t_{k-1}})] &\lesssim \sum_{k=2}^{\lfloor t/\Delta_{n}\rfloor}\int_{\R}\tilde{h}^2_k(x) dx \lesssim \Delta_n^3 \norm{f}^2_{H^1},
\end{align*}
which shows $R_1 = o(\Delta_n^2)$.  

With respect to $R_{2}$ and $u\in\R$ denote by $F_{t,n,k}^{(1)}(u)$, $F_{t,n,k}^{(2)}(u)$
the summands in the sums (\ref{eq:F_t_n_1}), (\ref{eq:F_t_n_2}) from Section \ref{sec:D} such
that $|\c Fh_{k}(u)|=|\c Ff(u)|\, |F_{t,n,k}^{(1)}(u)+F_{t,n,k}^{(2)}(u)|$. Let now $p_{t_{k-1},t_{j-1}}$ denote the joint Lebesgue density of $(X_{t_{k-1}},X_{t_{j-1}})$.  Non-degeneracy of $\sigma$ and noting that $X$ is a Gaussian process yields for $k>j$ $$|\c Fp_{t_{k-1},t_{j-1}}(u,v)|\leq e^{-C_1|u|^{2}(t_{k-1}-t_{j-1})}e^{-C_1|u+v|^{2}t_{j-1}}\leq e^{-C_1|u+v|^{2}t_{j-1}}$$ for some $C_1>0$, implying uniformly in $u\in\R$
\begin{align*}
&\sum_{k\neq j, k,j\geq 2}^{n}\int_{\R}|\c Fp_{t_{k-1},t_{j-1}}(u,v)|dv\lesssim \sum_{k\neq j, k,j\geq 2}^{n} \int_{\R} e^{-C_1|u+v|^2\min(t_{j-1},t_{k-1})}dv \\
&\quad \quad \lesssim \Delta_{n}^{-2}\int_{0}^{T}\int_{\R}e^{-C|u|^{2}r}du dr \lesssim\Delta_{n}^{-2}\int_0^T r^{-1/2} dr \lesssim \Delta_n^{-2},
\end{align*}
The same result holds uniformly in $v\in\R$ when integrating with respect to $u$.  The Plancherel theorem and the Cauchy-Schwarz inequality yield
\begin{align*}
|R_{2}| &=(2\pi)^{-1}\left|\sum_{k\neq j=2}^{\lfloor t/\Delta_{n}\rfloor}\int_{\R^{2}}\c Fh_{k}(u)\c Fh_{j}(v)\c Fp_{t_{k-1},t_{j-1}}(u,v)d(u,v)\right|\\
& \lesssim\sum_{k\neq j, k,j=2}^{n}\int_{\R^2}|\c Fh_{k}(u)|^2|\c Fp_{t_{k-1},t_{j-1}}(u,v)|d(u,v)\\
 &  \lesssim \int_{\R}\left|\c Ff(u)\right|^{2}(1+|u|^2) g_n(u)du,
\end{align*}
with 
\begin{align*}
	g_n(u) = (1+|u|^2)^{-1}\sup_{k\in\{1,\dots,n\}}\Delta_n^{-2}|F_{t,n,k}^{(1)}(u)+F_{t,n,k}^{(2)}(u)|^2.
\end{align*}
It is easy to check that the upper bounds on the terms $g_n^{(i)}(u)$ in the proof of Proposition \ref{prop:D_II} yield for the summands considered here that $\sup_{n\in\N,u\in\R} g_n(u)<\infty$ and $g_n(u)\rightarrow 0$ for all $u\in\R$ as $n\rightarrow\infty$. Consequently, by dominated convergence $R_{2}=o(\Delta_{n}^{2})$. 
\end{proof}

\subsection{Proof of Theorem \ref{thm:CLT_Fs}}

By arguing as in Section \ref{subsubsec:localization} it is enough to prove the CLT for $f\in FL^s(\R^d)$ for $s\geq 1$ under Assumption \aswithargsref{assu:sigmaB_local}{S_{loc}}{\alpha}{\beta}.  We use the decomposition \eqref{eq:decomposition} and aim at applying Propositions \ref{prop:M_convergence_H1} and \ref{prop:E_convergence_H1} to $M_{t,n}(f)$ and $E_{t,n}(f)$ with $Y=X$ (that is with $\xi=0$).  The respective proofs depend on $f$ only through applications of Lemma \ref{lem:estimates_f} and a specific argument for \eqref{eq:J2}.  We first check that the statements (i)-(v) of Lemma \ref{lem:estimates_f} hold for $f\in FL^s(\R^d)$ and $Y=X$.  

From the boundedness of $X$ in Assumption \aswithargsref{assu:sigmaB_local}{S_{loc}}{\alpha}{\beta} and the embedding $FL^{1}(\R^{d})\subset C^{1}(\R^{d})$ we get $f\in C^1(\R^d)$ and that the processes $t\mapsto f(X_t)$ and $t\mapsto |\nabla f(X_t)|$ are uniformly bounded.  This and the continuity of paths of $X$ already imply (i),(iv) and (v) of Lemma \ref{lem:estimates_f}, while for (ii) we also use $\langle \nabla f(X_{t_{k-1}}),Z_r\rangle^2\lesssim |Z_r|^2$ and $\sup_{r}\E|Z_r|^2\lesssim \Delta_n$ using Lemma \ref{lem:estimatesForSM}(i,ii).  At last, (iii) is obtained from the same upper bound on $\E|Z_r|^2$ and a Taylor expansion of $f$.  On the other hand,  with $\tilde{Z}_k$ from the proof of Proposition \ref{prop:M_convergence_H1} we have for a random variable $V_k\overset{d}{\sim}N(0,I_{d})$, which is independent of $\c F_{t_{k-1}}$, and $\epsilon'>0$ that
\begin{align*}
\E[\widetilde{Z}_{k}^{2}\I_{\{|\widetilde{Z}_{k}|>\epsilon\}}|\c F_{t_{k-1}}] 
	& \lesssim \E[\Delta_{n}^{3}|V_k|^2\I_{\{\Delta_{n}^{3/2}|V_k| > \epsilon' \}}|\c F_{t_{k-1}}]\rightarrow 0,
\end{align*}
where the conclusion holds by dominated convergence. This proves \eqref{eq:J2} and the conclusions of Propositions \ref{prop:M_convergence_H1} and \ref{prop:E_convergence_H1} hold true.  We conclude the proof of the CLT by the following proposition.

\begin{prop}
\label{prop:Drift_Convergence_Fls} Let $s\geq 1$ and grant Assumption \aswithargsref{assu:sigmaB_local}{S_{loc}}{\alpha}{\beta} with $\alpha>\max(0,1-s/2)$, $\beta>0$. Then we have for $f\in FL^{s}(\R^{d})$
\[
\Delta_{n}^{-1}D_{t,n}(f)\r{ucp}\frac{1}{2}(f(X_{t})-f(X_{0}))-\frac{1}{2}\int_{0}^{t}\sc{\nabla f(X_{r})}{\sigma_{r}dW_{r}}.
\]
\end{prop}

\begin{proof}
We use the notation from Section \ref{sec:D}. Write $D_{t,n}(f)=h(0)$ with the function $h$ defined there.  By Fourier inversion $h(0)=(2\pi)^{-d}\int_{\R^{d}}\c Fh(u)du$ $\P$-almost surely such that by the triangle inequality
\begin{align*}
	\E[\sup_{0\leq t\leq T}|D_{t,n}(f)|] 
		&= \E[\sup_{0\leq t\leq T}|h(0)|]\lesssim \int_{\R^d} |\c Ff(u)| \E\left[\sup_{0\leq t\leq T}|F_{t,n}^{(1)}(u) + F_{t,n}^{(2)}(u)|\right]du\\
		& \leq \Delta_n \int_{\R^d} |\c Ff(u)| (1+|u|)^s (g_n^{(1)}(u)+g_n^{(2)}(u))^{1/2}du.
\end{align*}
The result follows from dominated convergence, using Lemmas \ref{lem:F_1_bound} and \ref{lem:F_2_bound_SM}.
\end{proof}

\subsection{The lower bound: Proof of Theorem \ref{thm:lowerBound_H1}}

The sigma field $\c G_{n}$
is generated by $X_{0}$ and the increments $X_{t_{k}}-X_{t_{k-1}}$
for $k\in\{1,\dots,n\}$. Their independence and the Markov property
imply $\E[f(X_{t})|\c G_{n}]=\E[f(X_{t})|X_{t_{k-1}},X_{t_{k}}]$,
$t_{k-1}\leq t\leq t_{k}$. By the same argument the random variables
$$Y_{k}=\int_{t_{k-1}}^{t_{k}}(f(X_{t})-\E[f(X_{t})|\c G_{n}])dt$$
are uncorrelated, implying
\[
\norm{\Gamma_{T}(f)-\E\left[\l{\Gamma_{T}(f)}\c G_{n}\right]}_{L^{2}(\P)}^{2}=\sum_{k=1}^{n}\E\left[Y_{k}^{2}\right]=\sum_{k=1}^{n}\E\left[\text{Var}_{k}\left(\int_{t_{k-1}}^{t_{k}}f(X_{t})dt\right)\right],
\]
where $\text{Var}_{k}(Z)=\E[(Z-\E[Z|X_{t_{k-1}},X_{t_k}])^2|X_{t_{k-1}},X_{t_k}]$ is the conditional variance of a random
variable $Z$ with respect to the sigma field generated by $X_{t_{k-1}}$
and $X_{t_{k}}$.  For fixed $k$ write
\begin{align*}
& \text{Var}_{k}\left(\int_{t_{k-1}}^{t_{k}}f(X_{t})dt\right)  =\text{Var}_{k}\left(\int_{t_{k-1}}^{t_{k}}(f(X_{t})-f(X_{t_{k-1}}))dt\right)=T_{k}^{(1)}+T_{k}^{(2)}+T_{k}^{(3)}\\
& \quad \text{with }T_{k}^{(1)}  =\text{Var}_{k}\left(\int_{t_{k-1}}^{t_{k}}\sc{\nabla f(X_{t_{k-1}})}{X_{t}-X_{t_{k-1}}}dt\right),\\
& \quad \quad \quad T_{k}^{(2)}  =\text{Var}_{k}\left(\int_{t_{k-1}}^{t_{k}}(f(X_{t})-f(X_{t_{k-1}})-\sc{\nabla f(X_{t_{k-1}})}{X_{t}-X_{t_{k-1}}})dt\right),
\end{align*}
and with the crossterm satisfying $|T_{k}^{(3)}|\leq2(T_{k}^{(1)}T_{k}^{(2)})^{1/2}$.  Conditional on $X_{t_{k-1}}$, $X_{t_{k}}$, the process $(X_{t})_{t_{k-1}\leq t\leq t_{k}}$
is a Brownian bridge starting from $X_{t_{k-1}}$ and ending at $X_{t_{k}}$. Hence, 
\[
\E[X_{t}-X_{t_{k-1}}|X_{t_{k-1}},X_{t_{k}}]=\frac{t-t_{k-1}}{\Delta_{n}}(X_{t_{k}}-X_{t_{k-1}}),
\]
cf. Equation 6.10 of \citep{Karatzas1991}. Write $X_{t}-X_{t_{k-1}}=\int_{t_{k-1}}^{t_{k}}\I_{\{r\leq t\}}dX_{r}$.
Then
\begin{align*}
 & \int_{t_{k-1}}^{t_{k}}(X_{t}-X_{t_{k-1}}-\E[X_{t}-X_{t_{k-1}}|X_{t_{k-1}},X_{t_{k}}])dt\\
 & =\int_{t_{k-1}}^{t_{k}}\int_{t_{k-1}}^{t_{k}}(\I_{\{r\leq t\}}-\frac{t-t_{k-1}}{\Delta_{n}})dX_{r}dt=\int_{t_{k-1}}^{t_{k}}(t_{k}-r-\frac{1}{2}\Delta_{n})dX_{r},
\end{align*}
using the stochastic Fubini theorem in the last line. From Itô's isometry and independence of increments obtain 
\begin{align*}
\E[T_{k}^{(1)}] & =\E\left[|\nabla f(X_{t_{k-1}})|^{2}\right]\int_{t_{k-1}}^{t_{k}}(t_{k}-r-\frac{1}{2}\Delta_{n})^{2}dr=\frac{\Delta_{n}^{3}}{12}\E\left[|\nabla f(X_{t_{k-1}})|^{2}\right].
\end{align*}
Recall from the proofs of Theorems \ref{thm:CLT_II} and \ref{thm:CLT_III} that the statements of Lemma \ref{lem:estimates_f} apply to $X$ with deterministic coefficients $b$ and $\sigma$ when $X_0$ has a bounded Lebesgue density or when $d=1$.  Consequently, 
\begin{align*}
	\sum_{k=1}^n \E[T_{k}^{(1)}] = \E\left[\frac{1}{12}\int_{0}^{T}|\nabla f(X_{t})|^{2}dt\right] + o(\Delta_n^2).
\end{align*}
On the other hand, the Cauchy-Schwarz inequality shows
\begin{align*}
	\E[|\sum_{k=1}^n T_k^{(2)}|] 
		&\leq \sum_{k=1}^n \E\left[ \left(\int_{t_{k-1}}^{t_{k}}(f(X_{t})-f(X_{t_{k-1}})-\sc{\nabla f(X_{t_{k-1}})}{X_{t}-X_{t_{k-1}}})dt\right)^2 \right]\\
		&\leq \Delta_n \sum_{k=1}^n \int_{t_{k-1}}^{t_{k}}\E\left[ \left(f(X_{t})-f(X_{t_{k-1}})-\sc{\nabla f(X_{t_{k-1}})}{X_{t}-X_{t_{k-1}}}\right)^2 \right]dt,
\end{align*}
which is of order $o(\Delta_n^2)$ by Lemma \ref{lem:estimates_f}(iii).  Combining the last two displays also shows $\sum_{k=1}^n T_k^{(3)}=o_\P(\Delta_n^2)$.  The result follows then from 
\begin{align*}
\Delta_{n}^{-2}\sum_{k=1}^{n}\E\left[\text{Var}_{k}\left(\int_{t_{k-1}}^{t_{k}}f(X_{t})dt\right)\right]& \rightarrow\E\left[\frac{1}{12}\int_{0}^{T}|\nabla f(X_{t})|^{2}dt\right].
\end{align*}

\section*{Acknowledgement}
The author thanks Jakub Chorowski for helpful comments on an early draft of this manuscript. 

\bibliographystyle{apalike}
\bibliography{bibliography}

\end{document}